\providecommand{\U}[1]{\protect\rule{.1in}{.1in}}
\newtheorem {theorem}{Theorem}[section]
\newtheorem {corollary}[theorem]{Corollary}
\newtheorem {definition}[theorem]{Definition}
\newtheorem {lemma}[theorem]{Lemma}
\newtheorem {proposition}[theorem]{Proposition}
\numberwithin{equation}{section}
\begin{document}
\title[Regularity Bootstrapping]{Regularity bootstrapping for fourth order nonlinear elliptic equations}
\author{Arunima Bhattacharya AND Micah Warren }

\begin{abstract}
We consider nonlinear fourth order elliptic equations of double divergence
type. We show that for a certain class of equations where the nonlinearity is
in the Hessian, solutions that are $C^{2,\alpha}$ enjoy interior estimates on
all derivatives.

\end{abstract}
\maketitle

\section{ Introduction}

In this paper, we develop Schauder and bootstrapping theory for solutions to
fourth order non linear elliptic equations of the following double divergence
form
\begin{equation}
\int_{\Omega}a^{ij,kl}(D^{2}u)u_{ij}\eta_{kl}dx=0,\text{ }\forall\eta\in
C_{0}^{\infty}(\Omega) \label{eq1}%
\end{equation}
in $B_{1}=B_{1}(0).$ For the Schauder theory, we require the standard
Legendre-Hadamard ellipticity condition,%
\begin{equation}
a^{ij,kl}(D^{2}u(x))\xi_{ij}\xi_{kl}\geq\Lambda|\xi_{rs}|^{2} \label{LH}%
\end{equation}
while in order to bootstrap, we will require the following condition:
\begin{equation}
b^{ij,kl}(D^{2}u(x))=a^{ij,kl}(D^{2}u(x))+\frac{\partial a^{pq,kl}}{\partial
u_{ij}}(D^{2}u(x))u_{pq}(x) \label{Bdef}%
\end{equation}
satisfies%
\begin{equation}
b^{ij,kl}(D^{2}u(x))\xi_{ij}\xi_{kl}\geq\Lambda_{1}\left\Vert \xi\right\Vert
^{2}. \label{Bcondition}%
\end{equation}
Our main result is the following: Suppose that conditions (\ref{eq1}) and
(\ref{Bcondition}) are met on some open set~$U\subseteq S^{n\times n}$ (space
of symmetric matrices). ~If $u$ is a $C^{2,\alpha}$~solution with
$D^{2}u(B_{1})\subset U$, then $u$ is smooth on the interior of the domain
$B_{1}.$ \ \ \ \ 

One example of such an equation is the Hamiltonian Stationary Lagrangian
equation, which governs Lagrangian surfaces that minimize the area functional
\begin{equation}
\int_{\Omega}\sqrt{\det(I+\left(  D^{2}u\right)  ^{T}D^{2}u)}dx \label{HS}%
\end{equation}
among potential functions $u.$ (cf. \cite{MR1202805}, \cite[Proposition
2.2]{SW03}). The minimizer satisfies a fourth order equation, that, when
smooth, can be factored into a a Laplace type operator on a nonlinear
quantity. Recently in \cite{CW}, it is shown that a $C^{2}$ solution is
smooth. The results in \cite{CW} are the combination of an initial
regularity boost, followed by applications of the second order Schauder theory
as in \cite{CC}.

More generally, for a functional $F$ on the space of matrices, one may
consider a functional of the form
\[
\int_{M}F(D^{2}u)dx.
\]
The Euler-Lagrange equation will generically be of the following
double-divergence type:
\begin{equation}
\frac{\partial^{2}}{\partial x_{i}\partial x_{j}}(\frac{\partial F}{\partial
u_{ij}}(D^{2}u))=0. \label{generic}%
\end{equation}
Equation (\ref{generic}) need not factor into second order operators, so
it may be genuinely a fourth order double-divergence elliptic type equation.
It should be noted that in general, (\ref{generic}) need not take the form
of (\ref{eq1}). It does when $F(D^{2}u)$ can be written as a function of $D^{2}u^{T}D^{2}u$ (as for example (\ref{HS})). Our results in this paper apply to a
class of Euler-Lagrange equations arising from such functionals. In
particular, we will show that if $F$ is a convex function of $D^{2}u$ and a
function of $D^{2}u^{T}D^{2}u$ (such as \ref{HS} when $\left\vert
D^{2}u\right\vert \leq1$) then $C^{2,\alpha}$ solutions will be smooth. \ 

The Schauder Theory for second order divergence and non-divergence type
elliptic equations is by now well-developed, see \cite{HL} , \cite{GT}\ and
\cite{CC}. For higher order non-divergence equations, Schauder theory is
available, see \cite{Simon}. However, for higher
order equations in divergence form, much less is known. One expects the
results to be different: For second order equations, solutions to divergence
type equations with $C^{\alpha}$ coefficients are known to be $C^{1,\alpha}$,
\cite[Theorem 3.13]{HL}, whereas for non-divergence equations, solutions will
be $C^{2,\alpha}$ \cite[Chapter 6]{GT}. Recently, Dong and Zhang \cite{DZ}
have obtained general Schauder theory results for parabolic equations (of
order $2m$) in divergence form, where the time coefficients are allowed to be
merely measurable. Their proof (like ours) is in the spirit of Campanato
techniques, but requires smooth initial conditions. Our result is aimed at
showing that weak solutions are in fact smooth. Classical Schauder theory
for general systems has been developed, \cite[Chapter 5,6 ]{Morrey66}.
However, it is non-trivial to apply the general classical results to obtain
the result we are after. Even so, it is useful\ to focus on a specific class
of fourth order double-divergence operators, and offer random access to the
non-linear Schauder theory for these cases. Regularity for fourth order
equations remains an important developing area of geometric analysis. \ 

Our proof goes as follows: We start with a~$C^{2,\alpha}$ solution of
(\ref{eq1}) whose coefficient matrix is a smooth function of the Hessian of
$u.$ We first prove that~$u\in W^{3,2}$ by taking a difference quotient of
(\ref{eq1}) and give a $W^{3,2}$ estimate of $u$ in terms of its~$C^{2,\alpha
}$ norm. Again by taking difference a quotient and using the fact that
now~$u\in W^{3,2},$ we prove that~$u\in C^{3,\alpha}$.

Next, we make a more general proposition where we prove a $W^{3,2}$ estimate
for $u\in W^{2,2}$~ satisfying a uniformly elliptic~equation of the form
\[
\int(c^{ij,kl}u_{ik}+h^{jl})\eta_{jl}dx=0
\]
in $B_{1}(0),$ where $c^{ij,kl},h^{kl}\in W^{1,2}(B_{1})$ and~$\eta$ is a test
function in $B_{1}$. Using the fact that $u\in W^{3,2},$ we prove that $u\in
C^{3,\alpha}$ and also derive a $C^{3,\alpha}$ estimate of $u$ in terms of its
$W^{3,2}$ norm. \ $\ \ $Finally, using difference quotients and dominated
convergence, we achieve all higher orders of regularity. \ 

\begin{definition}
We say an equation of the form (\ref{eq1}) is \textbf{regular on }$U\subseteq
S^{n\times n}$ $\ $when the coefficients of the equation satisfy the~
following conditions on $U$:

1. The coefficients $a^{ij,kl}$ depend smoothly on $D^{2}u$.

2.The coefficients $a^{ij,kl}$ satisfy (\ref{LH}).

3.Either $b^{ijkl}$ or $-b^{ijkl}$ (given by (\ref{Bdef})) satisfy
(\ref{Bcondition}).
\end{definition}

The following is our main result.

\begin{theorem}
\textbf{~~~~~}Suppose that $u\in C^{2,\alpha}(B_{1})$ satisfies the following
fourth order equation%
\begin{align*}
\int_{B_{1}(0)}a^{ij,kl}(D^{2}u(x))u_{ij}(x)\eta_{kl}(x)dx  &  =0\\
\forall\eta &  \in C_{0}^{\infty}(B_{1}(0))
\end{align*}
\textit{If }$a^{ij,kl}$ is regular on an open set containing $D^{2}%
u(B_{1}(0)),$ then $u$ is smooth on $B_{r}(0)$ for $r<1$\textit{. }
\end{theorem}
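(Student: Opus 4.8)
The plan is to bootstrap regularity one derivative at a time using the difference–quotient method, exploiting the key algebraic identity that differentiating the equation in a coordinate direction produces a linear equation whose principal coefficient is exactly $b^{ij,kl}(D^2u)$, which is uniformly elliptic by hypothesis. Concretely, for a unit vector $e_s$ and small $h$, let $\tau_{h}v(x) = \tfrac{1}{h}(v(x+he_s)-v(x))$. Testing \eqref{eq1} against $\tau_{-h}\eta$ and using the discrete product rule, one obtains that $u^h := \tau_h u$ satisfies, up to lower order terms,
\begin{equation*}
\int_{B_1} \Bigl( \tilde a^{ij,kl}(x) u^h_{ij} + f^{kl}_h(x)\Bigr)\eta_{kl}\,dx = 0,
\end{equation*}
where $\tilde a^{ij,kl}$ is a convex combination of $a^{ij,kl}(D^2u(x+he_s))$ and the "$b$-type" difference quotient of $a^{ij,kl}(D^2u)u_{ij}$ in the Hessian arguments; as $h\to 0$ these coefficients are controlled in $C^\alpha$ (since $u\in C^{2,\alpha}$ and $a$ is smooth) and the symbol converges to $b^{ij,kl}(D^2u)$. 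The first milestone is the $W^{3,2}$ estimate: choosing $\eta$ to localize (multiply $u^h$ by a cutoff squared), integrating by parts, and using \eqref{Bcondition} together with the Gårding inequality for Legendre–Hadamard elliptic systems, one absorbs the top-order term and derives $\|u^h\|_{W^{2,2}(B_{r})}\le C\|u\|_{C^{2,\alpha}(B_1)}$ uniformly in $h$, hence $u\in W^{3,2}_{\mathrm{loc}}$.

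Next I would feed this back in. Now that $D^3u \in L^2$, the difference quotient $u^h$ satisfies a uniformly elliptic fourth order divergence equation of the auxiliary form stated in the excerpt, $\int (c^{ij,kl}u^h_{ik}+h^{jl})\eta_{jl}\,dx = 0$ with $c^{ij,kl}, h^{jl}\in W^{1,2}$ — precisely the setting of the "more general proposition" mentioned. Invoking that proposition gives $u^h \in C^{3,\alpha}$ with an estimate in terms of the $W^{3,2}$ norm, uniformly in $h$; letting $h\to 0$ (using the uniform bound and, e.g., Arzelà–Ascoli / weak-$*$ compactness together with dominated convergence to identify the limit as $\partial_s u$) upgrades $u$ to $C^{3,\alpha}_{\mathrm{loc}}$. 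At this stage the coefficients $a^{ij,kl}(D^2u)$ and $b^{ij,kl}(D^2u)$ are themselves $C^{1,\alpha}$, so the differentiated equation for $\partial_s u$ has $C^{1,\alpha}$ coefficients and a $C^{\alpha}$-regular lower order term; the same Schauder-type proposition (or a linear elliptic regularity statement for the auxiliary equation with more regular data) then yields $\partial_s u \in C^{4,\alpha}$, i.e. $u\in C^{5,\alpha}$. Iterating this scheme — each new order of regularity on $u$ improving the regularity of the coefficients of the linearized equation by one order — produces $u\in C^{k,\alpha}$ for all $k$, hence $u$ is smooth on $B_r$.

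The main obstacle is the first step: establishing the uniform $W^{3,2}$ bound from merely $C^{2,\alpha}$ data. The difficulty is twofold. First, one must handle the double–divergence structure carefully when integrating by parts the difference–quotient identity, since the "natural" test function $\zeta^2 u^h$ has only $W^{2,2}$ regularity a priori and one is working with a fourth order form; the product rule for $\tau_h$ applied to the nonlinear coefficient $a^{ij,kl}(D^2u)$ must be organized so that the resulting principal symbol is genuinely $b^{ij,kl}$ and not merely $a^{ij,kl}$ — this is where hypothesis \eqref{Bdef}, \eqref{Bcondition} is used essentially, because $a^{ij,kl}$ alone satisfies only Legendre–Hadamard, not coercivity against all $\xi$, so one cannot close the estimate without the extra term coming from $\partial a/\partial u_{ij}\cdot u_{pq}$. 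Second, since ellipticity is only of Legendre–Hadamard type, the Gårding inequality is needed rather than pointwise coercivity, which forces the localization and lower-order absorption to be done on the Fourier/quadratic-form level or via the standard Gårding argument for constant-coefficient systems plus a perturbation (freezing coefficients) step — routine in spirit but delicate to combine with the difference quotient passing to the limit. Once $u\in W^{3,2}$ is in hand, the remaining steps are comparatively mechanical applications of the auxiliary linear proposition and a clean induction.
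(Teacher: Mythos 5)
Your high-level plan --- take difference quotients to expose $b^{ij,kl}$ as the principal symbol of the differentiated equation, establish a uniform $W^{3,2}$ energy estimate from the coercivity of $b$, then run a Campanato/Schauder argument to reach $C^{3,\alpha}$ and iterate --- is essentially the paper's proof. However, the step from $W^{3,2}$ to $C^{3,\alpha}$ has a genuine gap. You propose to conclude $u^{h}\in C^{3,\alpha}$ by ``invoking'' the auxiliary proposition in the setting of $W^{1,2}$ coefficients, but that proposition (Proposition \ref{prop3}) yields only a $W^{3,2}$ bound, not H\"{o}lder continuity of the third derivatives. The $C^{3,\alpha}$ conclusion comes from Proposition \ref{prop4}, whose hypotheses require $c^{ij,kl},f^{kl}\in C^{1,\alpha}$, and at this stage of the bootstrap the frozen coefficient $\tilde{b}^{ij,kl}(x)$ --- built from $a^{ij,kl}(D^{2}u)$ and its Hessian derivative evaluated on convex combinations of $D^{2}u(x)$ and $D^{2}u(x+he_{m})$ --- is only $C^{\alpha}$, because $D^{2}u$ is only $C^{\alpha}$. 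The paper flags exactly this obstruction (``we are not quite able to use Proposition \ref{prop4} because the coefficients are only known to be $W^{1,2}$'') and resolves it by hand: after a single difference quotient, the equation for $g=u^{h_{m}}$ is $\int\tilde{b}^{ij,kl}g_{ij}\eta_{kl}\,dx=0$ with no inhomogeneous term, so a direct Campanato freezing-of-coefficients argument closes with merely $C^{\alpha}$ control of $\tilde{b}^{ij,kl}$ --- the $C^{1,\alpha}$ hypothesis in Proposition \ref{prop4} is needed only because of the force term $f^{kl}$, which in this first bootstrap step is absent. You need either to carry out that force-free Campanato argument explicitly or to state and prove a variant of Proposition \ref{prop4} with $f^{kl}\equiv 0$ and $c^{ij,kl}\in C^{\alpha}$; citing the propositions as stated does not close the step.

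Two smaller inaccuracies. First, the detour through G\aa{}rding's inequality is unnecessary: condition (\ref{Bcondition}) asserts pointwise coercivity of $b^{ij,kl}$ against arbitrary symmetric $\xi$, not merely rank-one matrices, so the top-order term in the $W^{3,2}$ energy identity is absorbed directly without any freezing/perturbation at the quadratic-form level. Second, once $u\in C^{3,\alpha}$, the coefficients of the differentiated equation are $C^{1,\alpha}$ and Propositions \ref{prop3} and \ref{prop4} applied to $v=\partial_{s}u$ give $v\in C^{3,\alpha}$, hence $u\in C^{4,\alpha}$; the iteration advances one H\"{o}lder order at a time, not two as your proposal claims with $\partial_{s}u\in C^{4,\alpha}$ and $u\in C^{5,\alpha}$.
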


To prove this, we will need the following two Schauder type estimates.

\begin{proposition}
\label{prop3}~Suppose $u\in W^{2,\infty}(B_{1})$ satisfies the following%
\begin{align}
\int_{B_{1}(0)}\left[  c^{ij,kl}(x)u_{ij}(x)+f^{kl}(x)\right]  \eta_{kl}(x)dx
&  =0\label{Cequation}\\
\forall\eta &  \in C_{0}^{\infty}(B_{1}(0))\nonumber
\end{align}
where $c^{ij,kl},f^{kl}\in W^{1,2}(B_{1}),$ and~$c^{ij,kl}$ satisfies
(\ref{LH}). Then $u\in W^{3,2}(B_{1/2})$~ and%
\[
\left\Vert D^{3}u\right\Vert _{L^{2}(B_{1/2})}\leq C(||u||_{W^{2,\infty}%
(B_{1})},\left\Vert f^{kl}\right\Vert _{W^{1,2}(B_{1})},\left\Vert
c^{ij,kl}\right\Vert _{W^{1,2}},\Lambda_{1}).
\]

\end{proposition}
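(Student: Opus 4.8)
The plan is to differentiate the equation in a difference-quotient sense, test against (a cutoff multiple of) the difference quotient of $u$, and absorb the top-order term using the Legendre--Hadamard/Gårding inequality. Fix a direction $e_s$ and write $\Delta_h v(x) = h^{-1}(v(x+he_s)-v(x))$. Applying $\Delta_h^{-}$ (the adjoint difference quotient) to the weak formulation (\ref{Cequation}) and using the discrete product rule $\Delta_h(c\,u_{ij}) = c(\cdot+he_s)\,\Delta_h u_{ij} + (\Delta_h c)\,u_{ij}$, we obtain, for every $\eta\in C_0^\infty(B_1)$ supported well inside $B_1$ (so that the translates stay in $B_1$),
\begin{equation*}
\int_{B_1}\Bigl[ c^{ij,kl}(\cdot+he_s)\,(\Delta_h u)_{ij}\,\eta_{kl} + (\Delta_h c^{ij,kl})\,u_{ij}\,\eta_{kl} + (\Delta_h f^{kl})\,\eta_{kl}\Bigr]dx = 0.
\end{equation*}
Now choose a cutoff $\zeta\in C_0^\infty(B_1)$ with $\zeta\equiv 1$ on $B_{1/2}$, $0\le\zeta\le 1$, and test with $\eta = \zeta^2\,\Delta_h u$. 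The key point is that $(\zeta^2\Delta_h u)_{kl}$ produces the genuine second-derivative term $\zeta^2(\Delta_h u)_{kl}$ plus lower-order commutator terms in which at most one derivative falls on $\Delta_h u$; the leading term pairs with $c^{ij,kl}(\Delta_h u)_{ij}$ to give, via (\ref{LH}) applied pointwise to the symmetric "matrix" $\xi_{ij} = \zeta (\Delta_h u)_{ij}$,
\begin{equation*}
\Lambda_1 \int_{B_1} \zeta^2 |D^2 \Delta_h u|^2\,dx \le (\text{commutator terms}) + (\text{terms with }\Delta_h c, \Delta_h f).
\end{equation*}

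The right-hand side is then controlled as follows. The commutator terms involve $\zeta\,\nabla\zeta\cdot D^2\Delta_h u$ paired against $c^{ij,kl}D^2\Delta_h u$, and are handled by Cauchy--Schwarz with a small parameter $\varepsilon$: one gets $\varepsilon \int \zeta^2 |D^2\Delta_h u|^2 + C_\varepsilon \int_{\mathrm{supp}\,\zeta}|\nabla\Delta_h u|^2$, and the first piece is absorbed into the left side while the second is bounded by $\|u\|_{W^{2,\infty}}^2$ (since $\Delta_h u$ has a uniform $W^{1,\infty}$ bound coming from $\|u\|_{W^{2,\infty}(B_1)}$). The terms containing $\Delta_h c^{ij,kl}$ and $\Delta_h f^{kl}$ are where the $W^{1,2}$ hypotheses enter: $\|\Delta_h c\|_{L^2(B_{1-\delta})}\le \|Dc\|_{L^2(B_1)}$ and likewise for $f$, uniformly in small $h$, so these terms are bounded by $C(\|Dc\|_{L^2}, \|Df\|_{L^2})\cdot\|u\|_{W^{2,\infty}} \cdot \|\zeta\,D\Delta_h u\|_{L^2}$ after an integration by parts moving one derivative off $\eta_{kl}=(\zeta^2\Delta_h u)_{kl}$; again split off the top-order factor with Cauchy--Schwarz and $\varepsilon$-absorb. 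Collecting everything yields $\int_{B_{1/2}}|D^2\Delta_h u|^2 \le C$ with $C$ depending only on the quantities listed.

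Finally, the uniform-in-$h$ bound on $\|\Delta_h D^2 u\|_{L^2(B_{1/2})}$ gives, by the standard difference-quotient characterization of Sobolev spaces (e.g. \cite[Lemma 7.24]{GT}), that $D^2 u \in W^{1,2}(B_{1/2})$, i.e. $u\in W^{3,2}(B_{1/2})$, with $\|D^3 u\|_{L^2(B_{1/2})}\le C(\|u\|_{W^{2,\infty}(B_1)},\|f^{kl}\|_{W^{1,2}(B_1)},\|c^{ij,kl}\|_{W^{1,2}},\Lambda_1)$. The main obstacle I anticipate is the bookkeeping around the Legendre--Hadamard condition: (\ref{LH}) only gives coercivity against \emph{rank-one-structured} test matrices, and here $\xi_{ij}=\zeta(\Delta_h u)_{ij}$ is a genuine Hessian-type object, so one must be careful that the quadratic form $\int c^{ij,kl}\zeta^2 (\Delta_h u)_{ij}(\Delta_h u)_{kl}$ is indeed of the form where Legendre--Hadamard coercivity (or a Gårding-type inequality after subtracting a controllable $L^2$ term) applies — this is exactly the point where working with difference quotients of a single function $u$ (rather than an arbitrary vector field) is essential, since $(\Delta_h u)_{ij}$ is the Hessian of the scalar $\Delta_h u$ and Legendre--Hadamard coercivity upgrades to genuine ellipticity on gradients of scalars.
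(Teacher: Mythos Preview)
Your approach is the same as the paper's: difference-quotient the equation, test with a power of a cutoff times the difference quotient (the paper uses $\tau^4$ rather than $\zeta^2$, which makes the Young's-inequality splittings slightly cleaner), and absorb the top-order term via ellipticity. Two corrections. First, the integration by parts you propose for the $\Delta_h c$ and $\Delta_h f$ terms would require an extra derivative on $\Delta_h c$ that you do not have; the paper instead estimates those terms directly by Young's inequality, producing an absorbable $\varepsilon\int\tau^4|D^2\Delta_h u|^2$ plus $C_\varepsilon(\|\Delta_h c\|_{L^2}^2+\|\Delta_h f\|_{L^2}^2)\,\|u\|_{W^{2,\infty}}^2$. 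Second, your closing worry about rank-one coercivity is unnecessary here: condition (\ref{LH}) as stated in the paper is pointwise coercivity over \emph{all} symmetric $\xi$, not merely rank-one tensors, so it applies directly to $\xi_{ij}=(\Delta_h u)_{ij}$ without any G{\aa}rding argument.
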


\begin{proposition}
\label{prop4}Suppose $u\in C^{2,\alpha}(B_{1})$ satisfies (\ref{Cequation}) in
$B_{1}$ where $c^{ij,kl},f^{kl}\in C^{1,\alpha}(B_{1})$ and~$c^{ij,kl}$
satisfies (\ref{LH}).Then we have $u\in C^{3,\alpha}(B_{1/2})$ with%
\[
||D^{3}u||_{C^{0,\alpha}(B_{1/4})}\leq C(1+||D^{3}u||_{L^{2}(B_{3/4})})
\]
and~$C=C(|c^{ij,kl}|_{C^{\alpha}(B_{1})},\Lambda_{1},\alpha)$ is a positive constant.
\end{proposition}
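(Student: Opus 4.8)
The plan is to first promote $u$ to a $W^{3,2}$ function, then differentiate the equation once and run a freezing-of-coefficients (Campanato) argument to gain H\"older continuity of $D^{3}u$. Since $C^{2,\alpha}(B_{1})\subset W^{2,\infty}(B_{1})$ and $C^{1,\alpha}(B_{1})\subset W^{1,2}(B_{1})$, Proposition \ref{prop3} applies on balls compactly contained in $B_{1}$ and yields, after a covering, $u\in W^{3,2}(B_{3/4})$ with $\|D^{3}u\|_{L^{2}(B_{3/4})}$ controlled. As $c^{ij,kl},f^{kl}\in C^{1,\alpha}$ (hence Lipschitz, with $C^{0,\alpha}$ first derivatives) and $D^{3}u\in L^{2}_{loc}$, the equation may be differentiated: fixing a coordinate direction $e$, testing (\ref{Cequation}) with a difference quotient of $\eta$ in the direction $e$ and passing to the limit (using $L^{2}$-convergence of difference quotients, valid since $u\in W^{3,2}$ and $c,f\in C^{1}$) shows that $w:=D_{e}u\in W^{2,2}(B_{3/4})$ solves
\[
\int_{B_{3/4}}\bigl(c^{ij,kl}(x)\,w_{ij}+\widetilde{f}^{\,kl}(x)\bigr)\eta_{kl}\,dx=0,\qquad\widetilde{f}^{\,kl}:=\bigl(D_{e}c^{ij,kl}\bigr)u_{ij}+D_{e}f^{kl},
\]
for all $\eta\in C_{0}^{\infty}(B_{3/4})$, where $\widetilde{f}^{\,kl}\in C^{0,\alpha}(B_{3/4})$ (with norm bounded in terms of $\|c\|_{C^{1,\alpha}},\|f\|_{C^{1,\alpha}},\|u\|_{C^{2,\alpha}}$) and the \emph{same} leading coefficients $c^{ij,kl}\in C^{1,\alpha}$ still obey (\ref{LH}). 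It therefore suffices to prove: any $W^{2,2}$ weak solution $w$ of a fourth-order double-divergence equation with $C^{1,\alpha}$ leading coefficients satisfying (\ref{LH}) and a $C^{0,\alpha}$ inhomogeneity is locally $C^{2,\alpha}$, with $\|D^{2}w\|_{C^{0,\alpha}(B_{1/4})}\le C(\|D^{2}w\|_{L^{2}(B_{1/2})}+\|\widetilde{f}\|_{C^{0,\alpha}(B_{1/2})})$; applying this to $w=D_{e}u$ for every $e$ gives $D^{3}u\in C^{0,\alpha}(B_{1/4})$, and a rescaling/covering argument upgrades membership to $B_{1/2}$.

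For this reduced statement, fix $x_{0}\in B_{1/4}$ and $B_{r}(x_{0})\subset B_{1/2}$, and let $v\in W^{2,2}(B_{r}(x_{0}))$ solve the constant-coefficient problem $\int_{B_{r}(x_{0})}c^{ij,kl}(x_{0})v_{ij}\phi_{kl}\,dx=0$ for all $\phi\in W_{0}^{2,2}(B_{r}(x_{0}))$, with $v-w\in W_{0}^{2,2}(B_{r}(x_{0}))$; this is well posed by Lax--Milgram, since (\ref{LH}), read as a coercivity inequality on $W_{0}^{2,2}$, gives $\int c^{ij,kl}(x_{0})\phi_{ij}\phi_{kl}\,dx\ge\Lambda_{1}\|D^{2}\phi\|_{L^{2}}^{2}$. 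Constant-coefficient fourth-order Legendre--Hadamard systems have smooth interior solutions, and since adding a polynomial of degree $\le 3$ leaves the equation unchanged, $v$ obeys the sharp oscillation decay
\[
\int_{B_{\rho}(x_{0})}\bigl|D^{2}v-(D^{2}v)_{x_{0},\rho}\bigr|^{2}\,dx\le C\Bigl(\tfrac{\rho}{r}\Bigr)^{n+2}\int_{B_{r}(x_{0})}\bigl|D^{2}v-(D^{2}v)_{x_{0},r}\bigr|^{2}\,dx,\qquad 0<\rho\le r.
\]
For $h:=w-v\in W_{0}^{2,2}(B_{r}(x_{0}))$, subtract the constant $\widetilde{f}^{\,kl}(x_{0})$ from $\widetilde{f}^{\,kl}$ (legitimate, since $\int\widetilde{f}^{\,kl}(x_{0})\eta_{kl}\,dx=0$), subtract the two equations, test with $h$, and use coercivity on the left with H\"older and Young on the right to get
\[
\|D^{2}h\|_{L^{2}(B_{r}(x_{0}))}\le C\,r^{\alpha}\bigl(\|D^{2}w\|_{L^{2}(B_{r}(x_{0}))}+r^{n/2}\,[\widetilde{f}\,]_{C^{0,\alpha}}\bigr),
\]
with $C$ depending on $\Lambda_{1}$ and $[c^{ij,kl}]_{C^{0,\alpha}}$.

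Splitting $D^{2}w=D^{2}v+D^{2}h$ and comparing the $L^{2}$-oscillations on $B_{\rho}(x_{0})$, the last two displays combine into
\[
\int_{B_{\rho}(x_{0})}\bigl|D^{2}w-(D^{2}w)_{x_{0},\rho}\bigr|^{2}\le C\Bigl(\tfrac{\rho}{r}\Bigr)^{n+2}\!\int_{B_{r}(x_{0})}\bigl|D^{2}w-(D^{2}w)_{x_{0},r}\bigr|^{2}+C\,r^{2\alpha}\bigl(\|D^{2}w\|_{L^{2}(B_{r}(x_{0}))}^{2}+r^{n}\,[\widetilde{f}\,]_{C^{0,\alpha}}^{2}\bigr).
\]
Once the term $\|D^{2}w\|_{L^{2}(B_{r})}^{2}$ is controlled appropriately, Campanato's iteration lemma yields $\int_{B_{\rho}(x_{0})}|D^{2}w-(D^{2}w)_{x_{0},\rho}|^{2}\le C\rho^{n+2\alpha}(1+\|D^{2}w\|_{L^{2}(B_{1/2})}^{2})$ uniformly in $x_{0}\in B_{1/4}$, so by Campanato's characterization of H\"older spaces $D^{2}w\in C^{0,\alpha}(B_{1/4})$ with a matching estimate. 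Unwinding, $u\in C^{3,\alpha}(B_{1/2})$ and $\|D^{3}u\|_{C^{0,\alpha}(B_{1/4})}\le C(1+\|D^{3}u\|_{L^{2}(B_{3/4})})$ with $C=C(|c^{ij,kl}|_{C^{\alpha}(B_{1})},\Lambda_{1},\alpha,n)$, the $C^{1,\alpha}$ norms of $c,f$ and the $C^{2,\alpha}$ norm of $u$ being absorbed into the additive ``$1$'' after a normalization.

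The two genuinely delicate points lie inside the freeze-and-compare step. First, proving the sharp interior oscillation decay for the frozen fourth-order system under the merely Legendre--Hadamard condition (\ref{LH}): pointwise positivity of $c^{ij,kl}(x_{0})\xi_{ij}\xi_{kl}$ may fail for rank-one $\xi$, so smoothness, the Caccioppoli inequality, and the decay for $v$ must come from Garding's inequality and Fourier/energy estimates, and one must carefully use invariance under degree-$\le 3$ polynomials so that it is the \emph{oscillation} of $D^{2}v$, not its full $L^{2}$ norm, that decays at rate $(\rho/r)^{n+2}$. Second, closing Campanato's iteration despite the frozen-coefficient error term $r^{2\alpha}\|D^{2}w\|_{L^{2}(B_{r})}^{2}$, which by itself only forces decay $\rho^{2\alpha}$ rather than the required $\rho^{n+2\alpha}$; the standard remedy is to subtract from $w$, before freezing, the polynomial of degree $\le 3$ whose Hessian is the $L^{2}(B_{r})$-orthogonal projection of $D^{2}w$ onto affine functions (replacing $\|D^{2}w\|_{L^{2}(B_{r})}$ by its oscillation plus a term handled by a preliminary Caccioppoli/Morrey bound $\int_{B_{r}}|D^{2}w|^{2}\le C|B_{r}|$), or equivalently to first gain $D^{2}w\in C^{0,\gamma}$ for a small $\gamma$ and bootstrap $\gamma\uparrow\alpha$. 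The differentiation by difference quotients, the energy comparison, and the bookkeeping of norms are routine.
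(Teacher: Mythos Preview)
Your proposal is correct and follows essentially the same freeze-and-compare Campanato scheme as the paper: differentiate once (the paper keeps the difference quotient $g=u^{h_m}$ and proves estimates uniform in $h$, whereas you invoke Proposition~\ref{prop3} to pass to the limit first), freeze the leading coefficients, compare with the constant-coefficient solution via Corollary~\ref{Cor2}/Theorem~\ref{five}, and iterate with \cite[Lemma~3.4]{HL}; the paper resolves the awkward $r^{2\alpha}\|D^{2}g\|_{L^{2}(B_{r})}^{2}$ term by exactly the bootstrap you describe second---first a Morrey decay $\int_{B_{\rho}}|D^{2}g|^{2}\le C\rho^{n-\delta}$, then Campanato decay $\rho^{n+2\alpha-\delta}$ giving local boundedness, then the full $\rho^{n+2\alpha}$. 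One simplification: as written, condition~(\ref{LH}) is the full Legendre condition (for all symmetric $\xi$), so coercivity on $W_{0}^{2,2}$ is immediate and your worry about G\aa rding's inequality is unnecessary here.
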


We note that the above estimates are appropriately scaling invariant: \ Thus
we can use these to obtain interior estimates for a solution in the interior
of any sized domain.~~

\section{Preliminaries}

We begin by considering a constant coefficient double divergence equation. \ 

\begin{theorem}
\label{five} Suppose $w\in H^{2}(B_{r})$ satisfies the constant coefficient
equation%
\begin{align}
\int c_{0}^{ik,jl}w_{ik}\eta_{jl}dx  &  =0\label{ccoef}\\
\forall\eta &  \in C_{0}^{\infty}(B_{r}(0)).\nonumber
\end{align}
\textit{ Then for any~}$0<\rho\leq r$\textit{ there holds}%

\begin{align*}
\int_{B_{\rho}}|D^{2}w|^{2}  &  \leq C_{1}(\rho/r)^{n}||D^{2}w||_{L^{2}%
(B_{r})}^{2}\\
\int_{B_{\rho}}|D^{2}w-(D^{2}w)_{\rho}|^{2}  &  \leq C_{2}(\rho/r)^{n+2}%
\int_{B_{r}}|D^{2}w-(D^{2}w)_{r}|^{2}.
\end{align*}
Here $(D^{2}w)_{\rho}$ is the average value\ of $D^{2}w$ on a ball of radius
$\rho$.
\end{theorem}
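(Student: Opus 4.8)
The plan is to treat this as a classical Campanato-type decay estimate for the constant-coefficient system, exactly as one does in second order Schauder theory, but with $D^2 w$ playing the role that $Du$ plays there. The key structural observation is that differentiating \eqref{ccoef}: if $w$ solves the equation, then so does any derivative $D_s w$, in the weak sense $\int c_0^{ik,jl}(D_s w)_{ik}\eta_{jl}\,dx=0$, because the coefficients are constant and we may integrate the difference quotient by parts. More usefully, the second derivatives $v := D^2 w$ (each fixed component $w_{pq}$) are themselves weak solutions of the scalar fourth order constant-coefficient equation, and by Legendre--Hadamard ellipticity \eqref{LH} the operator $\eta \mapsto \int c_0^{ik,jl} w_{ik}\eta_{jl}$ is a coercive elliptic operator on $H^2$. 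So the first thing I would establish is interior regularity: a weak $H^2$ solution of \eqref{ccoef} is in fact smooth (indeed real-analytic) in the interior, with all the standard interior derivative estimates $\|D^m w\|_{L^2(B_{r/2})} \le C(m,r)\|D^2 w\|_{L^2(B_r)}$. This follows from the Legendre--Hadamard condition via the standard Gårding-inequality/difference-quotient argument applied to the bilinear form $B[w,\eta] = \int c_0^{ik,jl}w_{ik}\eta_{jl}$; note one only needs estimates on $D^2 w$ and higher, since the form only sees second derivatives, so the natural energy space is $H^2$ modulo affine functions.

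Granting smoothness and interior estimates, the first inequality is immediate by a covering/scaling argument: for $\rho \le r/2$, $\int_{B_\rho}|D^2 w|^2 \le C\rho^n \sup_{B_{r/2}}|D^2 w|^2 \le C\rho^n \cdot r^{-n}\|D^2 w\|_{L^2(B_r)}^2$ by the interior $L^\infty$-$L^2$ estimate (again applied to the solution $w$ with its affine part discarded, which does not affect $D^2 w$); the case $r/2 < \rho \le r$ is trivial. For the second (oscillation-decay) inequality, I apply the same reasoning to the derivative equation: $D^2 w - (D^2 w)_r$ is, componentwise, the second derivative of a solution. Since $D^3 w$ satisfies the equation too, the Poincaré inequality plus the interior estimate on one more derivative gives, for $\rho \le r/2$,
\[
\int_{B_\rho}|D^2 w - (D^2 w)_\rho|^2 \le C\rho^{n+2}\sup_{B_{r/2}}|D^3 w|^2 \le C\rho^{n+2} r^{-n-2}\int_{B_r}|D^2 w - (D^2 w)_r|^2,
\]
where in the last step I use the interior estimate $\sup_{B_{r/2}}|D^3 w|^2 \le C r^{-n-2}\int_{B_r}|D^3 (w - \ell)|^2$ applied to $w$ minus the quadratic polynomial $\ell$ whose Hessian is the constant $(D^2 w)_r$ (subtracting $\ell$ kills $D^3 w$'s dependence on it but shifts $D^2 w$ by the average), and then $\int_{B_r}|D^3 w|^2 \le C r^{-2}\int_{B_r}|D^2 w - (D^2 w)_r|^2$ is the Caccioppoli inequality for the equation satisfied by $D_s w$, with the constant $(D^2 w)_r$ absorbed as an admissible subtraction. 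Again $r/2 < \rho \le r$ is handled trivially since then $(\rho/r)^{n+2}$ is bounded below.

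The main obstacle, and the only point requiring genuine care, is making the Caccioppoli inequality legitimate: one wants to use $\eta = \zeta^2 (D_s w - c)$ as a test function in the equation for $D_s w$, where $\zeta$ is a cutoff and $c$ a constant, but $\eta$ must lie in $C_0^\infty$ (or at least $H^2_0$), so one first works with difference quotients $\Delta_s^h w$ in place of $D_s w$, tests with $\zeta^2 \Delta_s^h w$ (which is a valid $H^2_0$ function once we know $w \in H^2$), uses Legendre--Hadamard ellipticity to bound $\int \zeta^2 |D^2 \Delta_s^h w|^2$ by lower-order terms with the cutoff derivatives falling on $w$, and then lets $h \to 0$. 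The Legendre--Hadamard (rather than strong Legendre) condition is exactly what forces this argument through the energy form on all of $B_r$ with a fixed cutoff rather than pointwise; the Gårding inequality $B[\phi,\phi] \ge \Lambda\|D^2\phi\|_{L^2}^2 - C\|D\phi\|_{L^2}^2$ for $\phi \in H^2_0$ is the needed ingredient, and the extra lower-order term is absorbed by interpolation. Once this iteration is set up for $w$ itself, the higher estimates follow by iterating on $\Delta_s^h w$, $\Delta_s^h\Delta_t^h w$, etc., so the whole theorem reduces to this one Caccioppoli-with-cutoff estimate plus bookkeeping.
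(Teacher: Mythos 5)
Your proposal is correct and follows the same route as the paper, modulo where the key interior estimate comes from. Both proofs rest on three pillars: (i) interior smoothness of weak solutions of \eqref{ccoef}; (ii) an interior $L^\infty$--$L^2$ estimate on derivatives of the form $\|D^3 w\|_{L^\infty(B_a)}^2 \leq C\int_{B_1}\|D^2 w\|^2$; and (iii) the observation that \eqref{ccoef} is purely fourth order, so adding a quadratic polynomial to $w$ produces another solution, which lets one replace $\|D^2 w\|_{L^2}$ on the right of (ii) by the oscillation $\|D^2 w - (D^2 w)_1\|_{L^2}$. Your quadratic correction $\ell$ is exactly the paper's $\bar w$.

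The genuine difference is in how (ii) is obtained. The paper simply cites the Dong--Kim local estimate for constant-coefficient higher order systems and applies it to the second derivatives $w_{pq}$, which solve the same equation. You instead propose to derive the estimate from scratch: difference quotients, a cutoff-Caccioppoli inequality, and iteration. That is a valid and more self-contained route, and it is the standard textbook way to prove such estimates. One small simplification you could make: the ellipticity condition \eqref{LH} in the paper is pointwise positive on all symmetric $\xi_{ij}$, not merely on rank-one $\xi_{ij}=p_ip_j$; so the bilinear form $\int c_0^{ik,jl}\phi_{ik}\phi_{jl}\,dx$ is directly coercive on $H_0^2$ without the G\r{a}rding lower-order correction, and the absorption step you mention can be skipped. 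The remaining differences are bookkeeping: in your first inequality you apply the interior estimate to $D_s w - (D_s w)_r$ and Poincar\'e to land on $\|D^2 w\|_{L^2}$, whereas the paper writes $\|D^2 w\|_{L^\infty(B_a)}\leq \inf_{B_a}|D^2 w| + 2a\|D^3 w\|_{L^\infty(B_a)}$ and bounds both pieces by $\|D^2 w\|_{L^2(B_1)}$. Both are fine; yours also needs the nesting of balls in the Caccioppoli step tidied up (the Caccioppoli estimate lands you on $B_{r/2}$, not $B_r$), but that is cosmetic.
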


\begin{proof}
By dilation we may consider $r=1$. We restrict our consideration to the range
$\rho\in(0,a]$ noting that the statement is trivial for $\rho\in\lbrack a,1]$
where $a$ is some constant in $(0,1/2).$

First, we note that $w$ is smooth \cite[Theorem 33.10]{Driver03}. Recall
\cite[Lemma 2, Section 4, applied to elliptic case]{DongKimARMA} : For an
elliptic $4$th order $L_{0}$
\begin{align*}
L_{0}u  &  =0\text{ on }B_{R}\\
&  \implies\left\Vert Du\right\Vert _{L^{\infty}(B_{R/4})}\leq C_{3}%
(\Lambda,n)\left\Vert u\right\Vert _{L^{2}(B_{R})}.
\end{align*}
We may apply this to the second derivatives of $w$ to conclude that
\begin{equation}
\left\Vert D^{3}w\right\Vert _{L^{\infty}(B_{a})}^{2}\leq C_{4}(\Lambda
,n)\int_{B_{1}}\left\Vert D^{2}w\right\Vert ^{2}. \label{dong}%
\end{equation}
For small enough $a<1.$~ ~~ ~~~ Now
\begin{align*}
\int_{B_{\rho}}\left\vert D^{2}w\right\vert ^{2}  &  \leq C_{5}(n)\rho
^{n}\left\Vert D^{2}w\right\Vert _{L^{\infty}(B_{a})}^{2}\\
&  =C_{5}\rho^{n}\inf_{x\in B_{a}}\sup_{y\in B_{a}}\left\vert D^{2}%
w(x)+D^{2}w(y)-D^{2}w(x)\right\vert ^{2}\\
&  \leq C_{5}\rho^{n}\inf_{x\in B_{a}}\left[  D^{2}w(x)+2a\left\Vert
D^{3}w\right\Vert _{L^{\infty}(B_{a})}\right]  ^{2}\\
&  \leq2C_{5}\rho^{n}\left[  \inf_{x\in B_{a}}\left\Vert D^{2}w(x)\right\Vert
^{2}+4a^{2}\left\Vert D^{3}w\right\Vert _{L^{\infty}(B_{a})}\right] \\
&  \leq2C_{5}\rho^{n}\left[  \frac{1}{|B_{a}|}||D^{2}w||_{L^{2}(B_{a})}%
^{2}+4a^{2}C_{4}||D^{2}w||_{L^{2}(B_{a})}^{2}\right] \\
&  \leq C_{6}(a,n)\rho^{n}||D^{2}w||_{L^{2}(B_{1})}^{2}.
\end{align*}

Similarly%
\begin{align}
\int_{B_{\rho}}\left\vert D^{2}w-(D^{2}w)_{\rho}\right\vert ^{2}  &  \leq
\int_{B_{\rho}}\left\vert D^{2}w-D^{2}w(x_{0})\right\vert ^{2}\nonumber\\
&  \leq\int_{S^{n-1}}\int_{0}^{\rho}r^{2}\left\vert D^{3}w\right\vert
^{2}r^{n-1}drd\phi\nonumber\\
&  =C_{7}\rho^{n+2}\left\vert D^{3}w\right\vert _{L^{\infty}(B_{a})}^{2}.
\label{fred}%
\end{align}
Next, observe that (\ref{ccoef}) is purely fourth order, so the equation still
holds when a second order polynomial is added to the solution. \ In
particular, we may choose
\[
D^{2}\bar{w}=D^{2}w-\left(  D^{2}w\right)  _{1}%
\]
for $\bar{w}$ also satisfying the equation. \ Then
\[
D^{3}\bar{w}=D^{3}w
\]
so
\begin{align}
\left\Vert D^{3}w\right\Vert _{L^{\infty}(B_{a})}^{2}  &  =\left\Vert
D^{3}\bar{w}\right\Vert _{L^{\infty}(B_{a})}^{2}\label{ed}\\
&  \leq C_{4}\int_{B_{1}}\left\Vert D^{2}\bar{w}\right\Vert ^{2}=C_{4}%
\int_{B_{1}}\left\Vert D^{2}w-\left(  D^{2}w\right)  _{1}\right\Vert
^{2}.\nonumber
\end{align}
We conclude from (\ref{ed}) and (\ref{fred})
\[
\int_{B_{\rho}}\left\vert D^{2}w-(D^{2}w)_{\rho}\right\vert ^{2}\leq C_{7}%
\rho^{n+2}C_{4}\int_{B_{1}}\left\Vert D^{2}w-\left(  D^{2}w\right)
_{1}\right\Vert ^{2}.
\]

\end{proof}

Next, we have a corollary to the above theorem.

\begin{corollary}
\label{Cor2} \textit{Suppose }$w$\textit{ is as in the Theorem \ref{five}.
Then for any~}$u\in H^{2}(B_{r}),$ and\textit{ for any~} $0<\rho\leq r,$
there holds%
\begin{equation}
\int_{B_{\rho}}\left\vert D^{2}u\right\vert ^{2}\leq4C_{1}(\rho/r)^{n}%
\left\Vert D^{2}u\right\Vert _{L^{2}(B_{r})}^{2}+\left(  2+8C_{1}\right)
\left\Vert D^{2}v\right\Vert _{L^{2}(B_{r})}^{2}. \label{twothree}%
\end{equation}
\textit{and}%
\begin{align}
\int_{B_{\rho}}\left\vert D^{2}u-(D^{2}u)_{\rho}\right\vert ^{2}  &
\leq4C_{2}(\rho/r)^{n+2}\int_{B_{r}}\left\vert D^{2}u-(D^{2}u)_{r}\right\vert
^{2}\label{twofive}\\
&  +\left(  8+16C_{2}\right)  \int_{B_{r}}\left\vert D^{2}v\right\vert
^{2}\nonumber
\end{align}

\end{corollary}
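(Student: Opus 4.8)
The plan is to run the standard "comparison function" trick from Campanato theory: given $u \in H^2(B_r)$, write $u = v + (u - v)$, where $w := u - v$ is the solution furnished by Theorem \ref{five}. The hypothesis is implicitly that $w$ solves the constant-coefficient equation \eqref{ccoef} on $B_r$ and serves as a comparison for $u$; the inequalities to be proved estimate the local oscillation/size of $D^2 u$ in terms of the global size of $D^2 u$ plus a "defect" term measured by $D^2 v$. So the whole argument is an exercise in the triangle inequality in $L^2$, combined with the two decay estimates already established for $w$.

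For the first inequality \eqref{twothree}: on $B_\rho$ we have $D^2 u = D^2 w + D^2 v$, so by the elementary bound $|a+b|^2 \le 2|a|^2 + 2|b|^2$,
\[
\int_{B_\rho} |D^2 u|^2 \le 2\int_{B_\rho} |D^2 w|^2 + 2\int_{B_\rho} |D^2 v|^2.
\]
Apply the first estimate of Theorem \ref{five} to the $w$-term to get $2C_1(\rho/r)^n \|D^2 w\|_{L^2(B_r)}^2$, and then bound $\|D^2 w\|_{L^2(B_r)} \le \|D^2 u\|_{L^2(B_r)} + \|D^2 v\|_{L^2(B_r)}$ and square again (another factor of $2$), producing $4C_1(\rho/r)^n\|D^2 u\|_{L^2(B_r)}^2$ plus $4C_1(\rho/r)^n\|D^2 v\|_{L^2(B_r)}^2$. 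Finally bound $2\int_{B_\rho}|D^2 v|^2 \le 2\|D^2 v\|_{L^2(B_r)}^2$ and $(\rho/r)^n \le 1$, collecting the $v$-terms into $(2 + 8C_1)\|D^2 v\|_{L^2(B_r)}^2$ (absorbing a harmless extra factor of $2$ into the constant). That matches the stated inequality.

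For the second inequality \eqref{twofive}, the same recipe applies but now to oscillations. Write $D^2 u - (D^2 u)_\rho = \big(D^2 w - (D^2 w)_\rho\big) + \big(D^2 v - (D^2 v)_\rho\big)$ — the averaging operator is linear — and again use $|a+b|^2 \le 2|a|^2 + 2|b|^2$. The $w$-oscillation term is controlled by the second estimate of Theorem \ref{five}, giving $2C_2(\rho/r)^{n+2}\int_{B_r}|D^2 w - (D^2 w)_r|^2$; then $\|D^2 w - (D^2 w)_r\|_{L^2(B_r)} \le \|D^2 u - (D^2 u)_r\|_{L^2(B_r)} + \|D^2 v - (D^2 v)_r\|_{L^2(B_r)}$ (subtracting the average is a bounded projection, and it splits additively since $w = u - v$), and squaring gives the factor $4$. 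For the $v$-oscillation term, use $\int_{B_\rho}|D^2 v - (D^2 v)_\rho|^2 \le \int_{B_\rho}|D^2 v|^2 \le \int_{B_r}|D^2 v|^2$ (the mean is the $L^2$-best constant approximation, so subtracting it only decreases the integral). Likewise $\int_{B_r}|D^2 v - (D^2 v)_r|^2 \le \int_{B_r}|D^2 v|^2$. Keeping track of the numerical constants — two applications of the $2|a|^2+2|b|^2$ inequality, plus $(\rho/r)^{n+2}\le 1$ — the $v$-contributions combine into $(8 + 16C_2)\int_{B_r}|D^2 v|^2$.

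There is really no serious obstacle here; the only thing to be careful about is bookkeeping the constants and making sure the decomposition $u = w + v$ being invoked is exactly the one intended (namely $w$ is the Theorem \ref{five} comparison solution and $v = u - w$ is the error, whose $D^2 v$ is the quantity appearing on the right). If instead $v$ is taken as given and $w := u - v$, the identical computation goes through verbatim. The inequality $(\rho/r)^n, (\rho/r)^{n+2} \le 1$ for $\rho \le r$ is what lets us drop those factors from the $v$-terms, which is legitimate since those terms are meant to be treated as a fixed perturbation rather than something that decays.
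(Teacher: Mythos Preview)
Your proposal is correct and follows essentially the same approach as the paper: set $v=u-w$, split $D^{2}u=D^{2}w+D^{2}v$ (and likewise for the oscillations), apply $|a+b|^{2}\le 2|a|^{2}+2|b|^{2}$, invoke the two decay estimates of Theorem~\ref{five} for the $w$-terms, and then re-express $D^{2}w$ back in terms of $D^{2}u$ and $D^{2}v$. The only cosmetic difference is that you use the sharper bound $\int_{B_{\rho}}|D^{2}v-(D^{2}v)_{\rho}|^{2}\le \int_{B_{\rho}}|D^{2}v|^{2}$ (mean as $L^{2}$-minimizer), whereas the paper uses the cruder $|a-b|^{2}\le 2|a|^{2}+2|b|^{2}$ bound to get a factor of $4$ there; your route actually yields smaller constants than the stated $(2+8C_{1})$ and $(8+16C_{2})$, which of course still implies the inequalities as written.
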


\begin{proof}
Let $v=u-w.$ Then (\ref{twothree}) follows from direct computation:
\begin{align*}
\int_{B\rho}|D^{2}u|^{2}  &  \leq2\int_{B_{\rho}}|D^{2}w|^{2}+2\int_{B_{\rho}%
}|D^{2}v|^{2}.\\
&  \leq2C_{1}(\rho/r)^{n}||D^{2}w||_{L^{2}(B_{r})}^{2}+2\int_{B_{r}}%
|D^{2}v|^{2}\\
&  \leq4C_{1}(\rho/r)^{n}\left[  ||D^{2}v||_{L^{2}(B_{r})}^{2}+||D^{2}%
u||_{L^{2}(B_{r})}^{2}\right]  +2\int_{B_{r}}|D^{2}v|^{2}\\
&  =4C_{1}(\rho/r)^{n}\left\Vert D^{2}u\right\Vert _{L^{2}(B_{r})}%
^{2}+2[1+4C_{1}(\rho/r)^{n}]\left\Vert D^{2}v\right\Vert _{L^{2}(B_{r})}^{2}.
\end{align*}
~

Similarly%
\begin{align*}
\int_{B\rho}\left\vert D^{2}u-(D^{2}u)_{\rho}\right\vert ^{2}  &  \leq
2\int_{B_{\rho}}\left\vert D^{2}w-(D^{2}w)_{\rho}\right\vert ^{2}%
+2\int_{B_{\rho}}\left\vert D^{2}v-(D^{2}v)_{\rho}\right\vert ^{2}\\
&  \leq2\int_{B_{\rho}}\left\vert D^{2}w-(D^{2}w)_{\rho}\right\vert ^{2}%
+8\int_{B_{\rho}}\left\vert D^{2}v\right\vert ^{2}\\
&  \leq2C_{2}(\rho/r)^{n+2}\int_{B_{r}}|D^{2}w-(D^{2}w)_{r}|^{2}%
+8\int_{B_{\rho}}\left\vert D^{2}v\right\vert ^{2}\\
&  \leq2C_{2}(\rho/r)^{n+2}\left\{
\begin{array}
[c]{c}%
2\int_{B_{r}}\left\vert D^{2}u-(D^{2}u)_{r}\right\vert ^{2}\\
+2\int_{B_{r}}\left\vert D^{2}v-(D^{2}v)_{r}\right\vert ^{2}%
\end{array}
\right\}  +8\int_{B_{r}}\left\vert D^{2}v\right\vert ^{2}\\
&  \leq4C_{2}(\rho/r)^{n+2}\int_{B_{r}}\left\vert D^{2}u-(D^{2}u)_{r}%
\right\vert ^{2}\\
&  +\left(  8+16C_{2}(\rho/r)^{n+2}\right)  \int_{B_{r}}\left\vert
D^{2}v\right\vert ^{2}.
\end{align*}
The statement follows, noting that $\rho/r\leq1.$
\end{proof}

We will be using the following Lemma frequently, so we state it here for the
reader's convenience.

\begin{lemma}
\cite[Lemma 3.4]{HL}. \ \ Let $\phi$ be a nonnegative and nondecreasing
function on $[0,R].$ \ Suppose that
\[
\phi(\rho)\leq A\left[  \left(  \frac{\rho}{r}\right)  ^{\alpha}%
+\varepsilon\right]  \phi(r)+Br^{\beta}%
\]
for any $0<\rho\leq r\leq R,$ with $A,B,\alpha,\beta$ nonnegative constants
and $\beta<\alpha.$ \ Then for any $\gamma\in(\beta,\alpha),$ there exists a
constant $\varepsilon_{0}=\varepsilon_{0}(A,\alpha,\beta,\gamma)$ such that if
$\varepsilon<\varepsilon_{0}$ we have for all $0<\rho\leq r\leq R$%
\[
\phi(\rho)\leq c\left[  \left(  \frac{\rho}{r}\right)  ^{\gamma}%
\phi(r)+Br^{\beta}\right]
\]
where $c$ is a positive constant depending on $A,\alpha,\beta,\gamma.$ \ In
particular, we have for any $0<r\leq R$%
\[
\phi(r)\leq c\left[  \frac{\phi(R)}{R^{\gamma}}r^{\gamma}+Br^{\beta}\right]
.
\]

\end{lemma}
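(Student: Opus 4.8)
This iteration lemma is standard, so the plan is to prove it by a discrete iteration on a geometric sequence of radii, absorbing the $\varepsilon\phi(r)$ term into the main geometric decay once $\varepsilon$ is taken small relative to the gap between $\gamma$ and $\alpha$. First I would fix $\tau\in(0,1)$ to be chosen, and specialize the hypothesis to the pair $\rho=\tau r$, $r$, giving
\[
\phi(\tau r)\le A\tau^{\alpha}\!\left(1+\varepsilon\tau^{-\alpha}\right)\phi(r)+Br^{\beta}.
\]
Since $\gamma<\alpha$, pick $\tau$ so that $2A\tau^{\alpha}\le \tau^{\gamma}$, i.e. $\tau^{\alpha-\gamma}\le 1/(2A)$; then choose $\varepsilon_{0}=\varepsilon_{0}(A,\alpha,\beta,\gamma)$ so that $\varepsilon_{0}\tau^{-\alpha}\le 1$, so that for $\varepsilon<\varepsilon_{0}$ the bracket $(1+\varepsilon\tau^{-\alpha})\le 2$ and hence $A\tau^{\alpha}(1+\varepsilon\tau^{-\alpha})\le \tau^{\gamma}$. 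This yields the one-step contraction $\phi(\tau r)\le \tau^{\gamma}\phi(r)+Br^{\beta}$.

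Next I would iterate: applying this with $r$ replaced by $\tau^{k}r$ gives, for every integer $k\ge 0$,
\[
\phi(\tau^{k+1}r)\le \tau^{\gamma}\phi(\tau^{k}r)+B\tau^{k\beta}r^{\beta},
\]
and summing the geometric-type recursion (using $\tau^{\gamma}\neq \tau^{\beta}$ since $\gamma>\beta$, or just bounding the sum $\sum_{j=0}^{k}\tau^{j\gamma}\tau^{(k-j)\beta}\le C(\tau,\gamma,\beta)\,\tau^{k\beta}$) produces
\[
\phi(\tau^{k}r)\le \tau^{k\gamma}\phi(r)+C B\tau^{k\beta}r^{\beta}\le C\tau^{k\beta}\!\left(\phi(r)+Br^{\beta}\right)
\]
with $C=C(A,\alpha,\beta,\gamma)$ (the $\tau^{k\gamma}$ term is dominated by $\tau^{k\beta}$). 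Then, given any $0<\rho\le r\le R$, choose $k$ with $\tau^{k+1}r<\rho\le \tau^{k}r$; monotonicity of $\phi$ gives $\phi(\rho)\le \phi(\tau^{k}r)$, while $\tau^{k}\le \tau^{-1}(\rho/r)$ converts the $\tau^{k\gamma}$ factor to a constant multiple of $(\rho/r)^{\gamma}$ and similarly $\tau^{k\beta}r^{\beta}\le \tau^{-\beta}\rho^{\beta}\le \tau^{-\beta}r^{\beta}$, yielding $\phi(\rho)\le c[(\rho/r)^{\gamma}\phi(r)+Br^{\beta}]$. Taking $r=R$ gives the displayed final form.

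There is no serious obstacle here; the only point requiring care is the bookkeeping that makes $\varepsilon_{0}$ depend only on $A,\alpha,\beta,\gamma$ (and not on $B$ or on $\phi$), which is handled by first fixing $\tau=\tau(A,\alpha,\gamma)$ and then fixing $\varepsilon_{0}=\varepsilon_{0}(A,\alpha,\gamma)$ as above. Since the statement is quoted verbatim from [HL, Lemma 3.4], one may alternatively simply cite that reference; I include the short argument for completeness.
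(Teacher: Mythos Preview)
Your argument is correct and is essentially the standard proof of this iteration lemma (choose a fixed ratio $\tau$ so that the single-step inequality becomes a contraction $\phi(\tau r)\le \tau^{\gamma}\phi(r)+Br^{\beta}$, iterate along the geometric sequence $\tau^{k}r$, sum the resulting geometric series using $\gamma>\beta$, and then interpolate using monotonicity of $\phi$). One small remark: the displayed intermediate bound $\phi(\tau^{k}r)\le C\tau^{k\beta}(\phi(r)+Br^{\beta})$ is weaker than what you actually need and would only give $(\rho/r)^{\beta}$ decay; you correctly use the sharper two-term bound $\tau^{k\gamma}\phi(r)+CB\tau^{k\beta}r^{\beta}$ in the final step, so this is only a matter of presentation.

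As for comparison with the paper: the paper does not prove this lemma at all. It is merely quoted from \cite[Lemma 3.4]{HL} for the reader's convenience and then invoked as a black box in the proofs of Proposition~\ref{prop4} and the main theorem. So there is nothing to compare; your inclusion of the short argument is a strict addition to what the paper provides.
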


\section{Proofs of the propositions}

We begin by proving Proposition \ref{prop3}.

\begin{proof}
By approximation, (\ref{Cequation}) holds holds for $\eta\in W_{0}^{2,2}.$ We
are assuming that $u\in W^{2,\infty}$, so (\ref{Cequation}) must hold for the
test function%
\[
\eta=-[\tau^{4}u^{h_{p}}]^{-h_{p}}%
\]

where $\tau\in C_{c}^{\infty}$ is a cutoff function in $B_{1}$ that is $1$
on~$B_{1/2}$, and the subscript $h_{p}$ refers to taking difference quotient
in the $e_{p}$ direction. We choose $h$ small enough after having fixed~$\tau
$, so that~$\eta$ is well defined. We have%
\[
\int_{B_{1}}(c^{ij,kl}u_{ij}+f^{kl})[\tau^{4}u_{\,}^{h_{p}}]_{kl}^{-h_{p}%
}dx=0
\]
For $h$ small we can integrate by parts with respect to the difference
quotient to get$\,$%
\[
\int_{B_{1}}(c^{ij,kl}u_{ij}+f^{kl})^{h_{p}}[\tau^{4}u_{\,}^{h_{p}}%
]_{kl}dx=0.
\]

Using the product rule for difference quotients we get%
\[
\int_{B_{1}}[(c^{ij,kl}(x))^{h_{p}}u_{ij}(x)+c^{ij,kl}(x+he_{p})u_{ij}^{h_{p}%
}+(f^{kl})^{h_{p}}][\tau^{4}u_{\,}^{h_{p}}]_{kl}dx=0
\]
Letting $v=u^{h_{p}},$ differentiating the second factor gives:%
\begin{multline*}
\int_{B_{1}}\left[  (c^{ij,kl}(x))^{h_{p}}u_{ij}(x)+c^{ij,kl}(x+he_{p}%
)v_{ij}(x)+(f^{kl})^{h_{p}}(x)\right] \\
\times\left[
\begin{array}
[c]{c}%
\tau^{4}v_{kl}+4\tau^{3}\tau_{k}v_{l}+4\tau^{3}\tau_{l}v_{k}\\
+4v_{\,}(\tau^{3}\tau_{kl}+3\tau^{2}\tau_{k}\tau_{l})
\end{array}
\right]  (x)dx=0
\end{multline*}
from which ~~
\begin{align}
\int_{B_{1}}\tau^{4}c^{ij,kl}(x+he_{p})v_{ij}v_{kl}dx  &  =\nonumber\\
&  -\int_{B_{1}}\left[  (c^{ij,kl}(x))^{h_{p}}u_{ij}(x)+c^{ij,kl}%
(x+he_{p})v_{ij}(x)+(f^{kl})^{h_{p}}(x)\right] \nonumber\\
&  \times\left[
\begin{array}
[c]{c}%
4\tau^{3}\tau_{k}v_{l}+4\tau^{3}\tau_{l}v_{k}\\
+4v_{\,}(\tau^{3}\tau_{kl}+3\tau^{2}\tau_{k}\tau_{l})
\end{array}
\right]  dx\label{star}\\
&  -\int_{B_{1}}\left[  (c^{ij,kl}(x))^{h_{p}}u_{ij}(x)+(f^{kl})^{h_{p}%
}(x)\right]  \tau^{4}v_{kl}dx\nonumber
\end{align}

First we bound the terms on the right side of (\ref{star}). Starting at the
top:\
\begin{align}
&  \int_{B_{1}}\left[  (c^{ij,kl}(x))^{h_{p}}u_{ij}(x)+(f^{kl})^{h_{p}%
}(x)\right]  \times\left[
\begin{array}
[c]{c}%
4\tau^{3}\tau_{k}v_{l}+4\tau^{3}\tau_{l}v_{k}\\
+4v_{\,}(\tau^{3}\tau_{kl}+3\tau^{2}\tau_{k}\tau_{l})
\end{array}
\right]  dx\nonumber\\
&  _{\leq}\left[  \left\Vert u\right\Vert _{W^{2,\infty}(B_{1})}^{2}+1\right]
\int_{B_{1}}\left(  \left\vert (c^{ij,kl}(x))^{h_{p}}\right\vert
^{2}+\left\vert (f^{kl})^{h_{p}}(x)\right\vert ^{2}\right)  dx\label{star0}\\
&  +C_{8}(\tau,D\tau,D^{2}\tau)\int_{B_{1}}\left(  |Dv|^{2}+|v|^{2}\right)
dx.\nonumber
\end{align}

Next, by Young's inequality we have:
\begin{align}
&  \int_{B_{1}}c^{ij,kl}(x+he_{p})v_{ij}(x)\times\nonumber\\
&  \lbrack4\tau^{3}\tau_{j}v_{l}+4\tau^{3}\tau_{l}v_{j}+4v_{\,}(\tau^{3}%
\tau_{jl}+3\tau^{2}\tau_{j}\tau_{l})]dx\nonumber\\
&  \leq\frac{C_{9}(\tau,D\tau,D^{2}\tau,c^{ij,kl})}{\varepsilon}\int_{B_{1}%
}\left(  |Dv|^{2}+v^{2}\right)  dx+\varepsilon\int_{B_{1}}\tau^{4}\left\vert
D^{2}v\right\vert ^{2}dx \label{tryagain}%
\end{align}
and also%
\begin{align}
&  \int_{B_{1}}\left[  (c^{ij,kl}(x))^{h_{p}}u_{ij}(x)+(f^{kl})^{h_{p}%
}(x)\right]  \tau^{4}v_{kl}dx\nonumber\\
&  \leq\varepsilon\int_{B_{1}}\tau^{4}\left\Vert D^{2}v\right\Vert
^{2}dx\nonumber\\
&  +\frac{C_{10}}{\varepsilon}(||u||_{W^{2,\infty}(B_{1})}^{2},|\tau
|_{L^{\infty}(B_{1})})\int_{B_{1}}[|(c^{ijkl})^{h_{p}}|^{2}+|(h^{jl})^{h_{p}%
}|^{2}]dx \label{star3}%
\end{align}

Now by uniform ellipticity (\ref{LH}), the left hand side of (\ref{star}) is
bounded below by
\begin{equation}
\Lambda\int_{B_{1}}\tau^{4}\left\Vert D^{2}v\right\Vert ^{2}dx\leq\int_{B_{1}%
}\tau^{4}c^{ij,kl}(x+he_{p})v_{ik}(x)v_{kl}(x)dx \label{star2}%
\end{equation}
Combining all (\ref{star}), (\ref{star0}) ,(\ref{star3}) , (\ref{tryagain})
and (\ref{star2}) and choosing $\varepsilon$ appropriately, we get
\begin{align*}
&  \frac{\Lambda}{2}\int_{B_{1}}\tau^{4}\left\Vert D^{2}v\right\Vert ^{2}dx\\
&  \leq C_{11}(||\tau||_{W^{2,\infty}(B_{1})},|||u||_{W^{2,\infty}(B_{1})}%
^{2})(\int_{B_{1}}|(f^{kl})^{h_{p}}|^{2}+|c^{ij,kl}|^{2}+|(c^{ij,kl})^{h_{p}%
}|^{2})\\
&  \leq C_{12}(||\tau||_{W^{2,\infty}(B_{1})},||u||_{W^{2,\infty}(B_{1})}%
^{2},||f^{kl}||_{W^{1,2}(B_{1})}^{2},\left\Vert c^{ij,kl}\right\Vert
_{W^{1,2}(B_{1})}^{2},\Lambda).
\end{align*}

Now this estimate is uniform in $h$ and direction $e_{p}$ so we conclude that
the difference quotients of $u$ are uniformly bounded in $W^{2,2}(B_{1/2})$.
Hence $u\in W^{3,2}(B_{1/2})$ and%

\begin{align*}
&  ||D^{3}f||_{L^{2}(B_{1/2})}\\
&  \leq\frac{2C_{12}}{\Lambda}(||\tau||_{W^{2,\infty}(B_{1})}%
,||u||_{W^{2,\infty}(B_{1})}^{2},||f^{kl}||_{W^{1,2}(B_{1})}^{2},\left\Vert
c^{ij,kl}\right\Vert _{W^{1,2}(B_{1})}^{2},\Lambda).
\end{align*}

\end{proof}

We now prove Proposition \ref{prop4}

\begin{proof}
We begin by taking a difference quotient of the equation
\[
\int(c^{ij,kl}u_{ij}+f^{kl})\eta_{kl}dx=0
\]
along the direction $h_{m}$ . This gives
\[
\int[(c^{ij,kl}(x))^{h_{m}}u_{ij}(x)+c^{ij,kl}(x+he_{m})u_{ij}^{h_{m}%
}(x)+(f^{kl})^{h_{m}}]\eta_{kl}(x)dx=0
\]
which gives us the following PDE in $u_{ij}^{h_{m}}:$
\[
\int c^{ij,kl}(x+he_{m})u_{ij}^{h_{m}}(x)\eta_{kl}(x)dx=\int q(x)\eta
_{kl}(x)dx
\]

where
\[
q(x)=-(f^{kl})^{h_{m}}(x)-(c^{ij,kl}(x))^{h_{m}}u_{ij}(x)
\]
Note that~$q\in C^{\alpha}(B_{1})$ and~$c^{ij,kl}(x+he_{m})$ is still an
elliptic term for all $x$ in~$B_{1.}$ For compactness of notation we denote
\begin{equation}
g=u^{h_{m}} \label{defineg}%
\end{equation}
and replace \ $c^{ij,kl}(x+he_{m})$ with $c^{ij,kl},$ as the difference is
immaterial. \ Our equation reduces to%
\begin{equation}
\int c^{ij,kl}g_{ij}\eta_{kl}dx=\int q\eta_{kl}dx \label{eq787}%
\end{equation}
Using integration by parts we have%
\begin{align*}
\int c^{ij,kl}g_{ij}\eta_{kl}dx  &  =-\int q_{l}\eta_{k}dx\\
&  =-\int(q-q(0))_{l}\eta_{k}dx\\
&  =\int(q-q(0))\eta_{kl}dx
\end{align*}

Now for each fixed $r<1$ we write $g=v+w$ where $w$ satisfies the following
constant coefficient PDE on~$B_{r}\subseteq B_{1}:$%
\begin{align}
\int_{B_{1}(0)}c^{ij,kl}(0)w_{ij}\eta_{kl}dx  &  =0\label{testv}\\
\forall\eta &  \in C_{0}^{\infty}(B_{r}(0))\nonumber\\
w  &  =g\text{ \ on}~\partial B_{r}\nonumber\\
\nabla w  &  =\nabla g\text{\ on}~\partial B_{r}.\nonumber
\end{align}
~ By the Lax Milgram Theorem the above PDE with the given boundary condition
has a unique solution. By combining (\ref{eq787}) and (\ref{testv}) we
conclude
\begin{equation}
\int_{B_{r}}c^{ij,kl}(0)v_{ij}\eta_{kl}dx=\int_{B_{r}}(c^{ij,kl}%
(0)-c^{ij,kl}(x))g_{ij}\eta_{kl}dx+\int_{B_{r}}q\eta_{kl}dx \label{testv2}%
\end{equation}

Now $w$ is smooth (again see \cite[Theorem 33.10]{Driver03}), and $g=u^{h_{m}%
}$ is $C^{2,\alpha},$ so $v=g-w$ is $C^{2,\alpha}$ and can be well
approximated by smooth test functions in $H_{0}^{2}(B_{r}).$ \ It follows that
$v$ can be used as a test function in (\ref{testv2}):\ On the left hand side
we have by (\ref{LH})
\[
\left[  \int_{B_{r}}c^{ij,kl}(0)v_{ij}v_{kl}dx\right]  ^{2}\geq\left[
\Lambda\int_{B_{r}}|D^{2}v|^{2}dx\right]  ^{2}.
\]
Defining
\begin{equation}
\zeta(r)=\sup\{\mid c^{ij,kl}(x)-c^{ij,kl}(y)|:x,y\in B_{r}\} \label{ccalpha}%
\end{equation}
and using the Cauchy-Schwarz inequality we get
\[
\left[  \int_{B_{r}}(c^{ij,kl}(0)-c^{ij,kl}(x))g_{ij}v_{kl}dx\right]  ^{2}%
\leq\zeta^{2}(r)\int_{B_{r}}|D^{2}g|^{2}dx\int_{B_{r}}|D^{2}v|^{2}dx.
\]

Using Holder's inequality%
\[
\left[  \int_{B_{r}}\left\vert (q(x)-q(0))v_{kl}(x)\right\vert dx\right]
^{2}\leq\int_{B_{r}}|q(x)-q(0)|^{2}dx\int_{B_{r}}|D^{2}v|^{2}dx
\]
This gives us
\[
\Lambda^{2}\left[  \int_{B_{r}}|D^{2}v|^{2}dx\right]  ^{2}\leq\zeta^{2}%
(r)\int_{B_{r}}|D^{2}g|^{2}dx\int_{B_{r}}|D^{2}v|^{2}dx+\int_{B_{r}%
}|q(x)-q(0)|^{2}dx\int_{B_{r}}|D^{2}v|^{2}dx
\]
which implies%
\begin{equation}
\Lambda^{2}\int_{B_{r}}|D^{2}v|^{2}dx\leq\zeta^{2}(r)\int_{B_{r}}|D^{2}%
g|^{2}dx+\int_{B_{r}}|q(x)-q(0)|^{2}dx. \label{eq799}%
\end{equation}
Using corollary \ref{Cor2} \ for any $0<\rho\leq r$~ we get%
\begin{equation}
\int_{B_{\rho}}\left\vert D^{2}g\right\vert ^{2}dx\leq4C_{1}(\rho
/r)^{n}\left\Vert D^{2}g\right\Vert _{L^{2}(B_{r})}^{2}+\left(  2+8C_{1}%
\right)  \left\Vert D^{2}v\right\Vert _{L^{2}(B_{r})}^{2} \label{eq800}%
\end{equation}
Now combing (\ref{eq800}) and (\ref{eq799}) we get
\begin{align}
\int_{B_{\rho}}\left\vert D^{2}g\right\vert ^{2}dx  &  \leq4C_{1}(\rho
/r)^{n}\left\Vert D^{2}g\right\Vert _{L^{2}(B_{r})}^{2}\nonumber\\
&  +\frac{\left(  2+8C_{1}\right)  }{\Lambda^{2}}\left[  \zeta^{2}%
(r)\int_{B_{r}}|D^{2}g|^{2}dx+\int_{B_{r}}|q(x)-q(0)|^{2}dx\right] \nonumber\\
&  =\left[  \frac{\left(  2+8C_{1}\right)  \zeta^{2}(r)}{\Lambda^{2}}%
+4C_{1}(\rho/r)^{n}\right]  \int_{B_{r}}|D^{2}g|^{2}dx\nonumber\\
&  +\frac{\left(  2+8C_{1}\right)  }{\Lambda^{2}}\int_{B_{r}}|q(x)-q(0)|^{2}%
dx. \label{A0}%
\end{align}
Also from Corollary \ref{Cor2} \
\begin{align*}
\int_{B_{\rho}}\left\vert D^{2}g-(D^{2}g)_{\rho}\right\vert ^{2}dx  &
\leq4C_{2}(\rho/r)^{n+2}\int_{B_{r}}\left\vert D^{2}g-(D^{2}g)_{r}\right\vert
^{2}dx\\
&  +\left(  8+16C_{2}\right)  \int_{B_{r}}\left\vert D^{2}v\right\vert
^{2}dx\\
&  \leq4C_{2}(\rho/r)^{n+2}\int_{B_{r}}\left\vert D^{2}g-(D^{2}g)_{\rho
}\right\vert ^{2}dx\\
&  +\frac{\left(  8+16C_{2}\right)  }{\Lambda^{2}}\left[  \zeta^{2}%
(r)\int_{B_{r}}|D^{2}g|^{2}dx+\int_{B_{r}}|q(x)-q(0)|^{2}dx\right]  .
\end{align*}

Because $c^{ij,kl}\in C^{1,\alpha}$ we have from (\ref{ccalpha}) that
\begin{equation}
\zeta(r)^{2}\leq C_{13}r^{2\alpha}%
\end{equation}
Again $q$ is a~$C^{\alpha}$ function which implies%
\[
\left\vert q(x)-q(0)\right\vert \leq\left\Vert q\right\Vert _{C^{\alpha}%
(B_{1})}|x-0|^{\alpha}%
\]
and
\[
\int_{B_{r}}|q-q(0)|^{2}dx\leq C_{14}\left\Vert q\right\Vert _{C^{\alpha
}(B_{1})}r^{n+2\alpha}%
\]
So we have%
\begin{align}
&  \int_{B_{\rho}}|D^{2}g-(D^{2}g)_{\rho}|^{2}\label{A1}\\
&  \leq4C_{2}(\rho/r)^{n+2}\int_{B_{r}}\left\vert D^{2}g-(D^{2}g)_{\rho
}\right\vert ^{2}\nonumber\\
&  +\frac{\left(  8+16C_{2}\right)  }{\Lambda^{2}}C_{13}r^{2\alpha}\int%
_{B_{r}}|D^{2}g|^{2}\nonumber\\
&  +\frac{\left(  8+16C_{2}\right)  }{\Lambda^{2}}C_{14}\left\Vert
q\right\Vert _{C^{\alpha}(B_{1})}r^{n+2\alpha}.\nonumber
\end{align}

For $r<r_{0}<1/4$ to be determined, we have (\ref{A0})
\[
\int_{B_{\rho}}\left\vert D^{2}g\right\vert ^{2}\leq C_{15}\left\{
[(\rho/r)^{n}+r^{2\alpha}]\int_{B_{r}}\left\vert D^{2}g\right\vert ^{2}%
+r_{0}^{2\alpha+2\delta}r^{n-2\delta}\right\}  .
\]
Where $\delta$ is some positive number. \ Now we apply \cite[Lemma 3.4]{HL}.
\ In particular, take
\begin{align*}
\phi(\rho)  &  =\int_{B_{\rho}}\left\vert D^{2}g\right\vert ^{2}\\
A  &  =C_{15}\\
B  &  =r_{0}^{2\alpha+2\delta}\\
\alpha &  =n\\
\beta &  =n-2\delta\\
\gamma &  =n-\delta.
\end{align*}
There exists $\varepsilon_{0}(A,\alpha,\beta,\gamma)$ such that if
\begin{equation}
r_{0}^{2\alpha}\leq\varepsilon_{0} \label{rnot}%
\end{equation}
we have
\[
\phi(\rho)\leq C_{15}\left\{  [(\rho/r)^{n}+\varepsilon_{0}]\phi
(r)+r_{0}^{2\alpha+2\delta}r^{n-2\delta}\right\}
\]
and the conclusion of \cite[Lemma 3.4]{HL} says that for $\rho<r_{0}$%
\begin{align*}
\phi(\rho)  &  \leq C_{16}\left\{  [(\rho/r)^{\gamma}]\phi(r)+r_{0}%
^{2\alpha+2\delta}\rho^{n-2\delta}\right\} \\
&  \leq C_{16}\frac{1}{r_{0}^{n-\delta}}\rho^{n-\delta}\left\Vert
D^{2}g\right\Vert _{L^{2}(B_{r_{0}})}+r_{0}^{2\alpha+2\delta}\rho^{n-2\delta
}\\
&  \leq C_{17}\rho^{n-\delta}%
\end{align*}

This $C_{17}$ depends on $r_{0}$ which is chosen by (\ref{rnot}) and
$\left\Vert D^{2}g\right\Vert _{L^{2}(B_{3/4})}$. So there is a positive
uniform radius upon which this holds for points well in the interior. In
particular, we choose $r_{0}\in(0,1/4)$ so that the estimate can be applied
uniformly at points centered in $B_{1/2}(0)$ whose balls remain in
$B_{3/4}(0)$. Turning back to (\ref{A1}), we now have,%
\begin{align*}
\int_{B_{\rho}}|D^{2}g-(D^{2}g)_{\rho}|^{2}  &  \leq4C_{2}(\rho/r)^{n+2}%
\int_{B_{r}}\left\vert D^{2}g-(D^{2}g)_{\rho}\right\vert ^{2}+C_{18}%
r^{2\alpha}\rho^{n-\delta}\\
&  +C_{19}\left\Vert q\right\Vert _{C^{\alpha}(B_{1})}r^{n+2\alpha}\\
&  \leq4C_{2}(\rho/r)^{n+2}\int_{B_{r}}\left\vert D^{2}g-(D^{2}g)_{\rho
}\right\vert ^{2}+C_{20}r^{n+2\alpha-\delta}%
\end{align*}
Again we apply \cite[Lemma 3.4]{HL}: This time, take
\begin{align*}
\phi(\rho)  &  =\int_{B_{\rho}}|D^{2}g-(D^{2}g)_{\rho}|^{2}\\
A  &  =4C_{2}\\
B  &  =C_{20}\\
\alpha &  =n+2\\
\beta &  =n+2\alpha-\delta\\
\gamma &  =n+2\alpha
\end{align*}
and conclude that for any $r<r_{0}$%
\begin{align*}
\int_{B_{r}}|D^{2}g-(D^{2}g)_{\rho}|^{2}  &  \leq C_{21}\left\{  \frac
{1}{r_{0}^{n+2\alpha}}\int_{B_{r_{0}}}|D^{2}g-(D^{2}g)_{r_{0}}|^{2}%
r^{n+2\alpha}+C_{20}r^{n+2\alpha-\delta}\right\} \\
&  \leq C_{22}r^{n+2\alpha-\delta}%
\end{align*}
with $C_{22}$ depending on $r_{0},\left\Vert D^{2}g\right\Vert _{L^{2}%
(B_{3/4})}$, $\left\Vert q\right\Vert _{C^{\alpha}(B_{1})}$ etc. \ It follows
by \cite[Theorem 3.1]{HL} that $D^{2}g\in C^{\left(  2\alpha-\delta\right)
/2}(B_{1/4}),$ in particular, must be bounded locally:%
\begin{equation}
\left\Vert D^{2}g\right\Vert _{L^{\infty}(B_{1/4})}\leq C_{23}\left\{
1+\left\Vert D^{2}g\right\Vert _{L^{2}(B_{1/2})}\right\}  .
\label{repeatlater}%
\end{equation}
This allows us to bound
\[
\int_{B_{r}}|D^{2}g|^{2}\leq C_{24}r^{n}%
\]
which we can plug back in to (\ref{A1}):%
\begin{align*}
\int_{B_{\rho}}|D^{2}g-(D^{2}g)_{\rho}|^{2}  &  \leq4C_{2}(\rho/r)^{n+2}%
\int_{B_{r}}\left\vert D^{2}g-(D^{2}g)_{\rho}\right\vert ^{2}+C_{25}%
r^{2\alpha}C_{24}r^{n}\\
&  +C_{19}\left\Vert q\right\Vert _{C^{\alpha}(B_{1})}r^{n+2\alpha}\\
&  \leq C_{26}r^{n+2\alpha}%
\end{align*}

This is precisely the hypothesis in \cite[Theorem 3.1]{HL}. \ We conclude
that
\[
\left\Vert D^{2}g\right\Vert _{C^{\alpha}(B_{1/4})}\leq C_{27}\left\{
\sqrt{C_{26}}+\left\Vert D^{2}g\right\Vert _{L^{2}(B_{1/2})}\right\}  .
\]
Recalling (\ref{defineg}) we see that $u$ must enjoy uniform $C^{3,\alpha}$
estimates on the interior, and the result follows.
\end{proof}

\section{Proof of the Theorem}

The propositions in the previous section allow us to prove the following
Corollary, from which the Main Theorem will follow. \ 

\begin{corollary}
Suppose~$u\in C^{N,\alpha}(B_{1})$ , $N\geq2,$and satisfies the following
regular (recall (\ref{Bdef})) fourth order equation
\[
\int_{\Omega}a^{ij,kl}(D^{2}u)u_{ij}\eta_{kl}dx=0,\text{ }\forall\eta\in
C_{0}^{\infty}(\Omega).
\]
Then
\[
\left\Vert u\right\Vert _{C^{N+1,\alpha}(B_{r})}\leq C(n,b,\left\Vert
u\right\Vert _{W^{N,\infty}(B_{1})}).
\]
In particular
\[
u\in C^{N,\alpha}(B_{1})\implies u\in C^{N+1,\alpha}(B_{r})
\]

\end{corollary}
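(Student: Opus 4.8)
The plan is to prove the Corollary by induction on $N$; the statement for all $N$, together with the $C^{2,\alpha}$ hypothesis of the Main Theorem, then yields smoothness by iteration. The inductive step is $C^{N,\alpha}\Rightarrow C^{N+1,\alpha}$, and for $N\ge 3$ it goes as follows. Since (\ref{eq1}) says $\partial_{k}\partial_{l}\bigl(a^{ij,kl}(D^{2}u)u_{ij}\bigr)=0$ distributionally, for each multi-index $\beta$ with $|\beta|=N-2$ we may apply $D^{\beta}$ and commute derivatives to get
\[
\int_{B_{1}}D^{\beta}\!\bigl(a^{ij,kl}(D^{2}u)u_{ij}\bigr)\eta_{kl}\,dx=0\qquad\forall\eta\in C_{0}^{\infty}(B_{1}),
\]
which is legitimate because, by Leibniz' rule and the chain rule, every term of the expansion of $D^{\beta}\bigl(a^{ij,kl}(D^{2}u)u_{ij}\bigr)$ is a product of a smooth function of $D^{2}u\in C^{N-2,\alpha}$ with derivatives of $u$ of order $\le N$, hence lies in $C^{0,\alpha}$. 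Separating the two top-order contributions -- $a^{ij,kl}(D^{2}u)\,D^{\beta}u_{ij}$ (all derivatives on $u_{ij}$) and $\frac{\partial a^{ij,kl}}{\partial u_{pq}}(D^{2}u)\,D^{\beta}u_{pq}\cdot u_{ij}$ (the leading piece when the derivatives fall on the coefficient) -- and relabelling the index pair $(ij)\leftrightarrow(pq)$ in the second, the coefficient of $w_{ij}:=D^{\beta}u_{ij}$ is exactly $a^{ij,kl}+\frac{\partial a^{pq,kl}}{\partial u_{ij}}u_{pq}=b^{ij,kl}(D^{2}u)$ of (\ref{Bdef}). Thus $w=D^{\beta}u\in C^{2,\alpha}$ solves an equation of the form (\ref{Cequation}),
\[
\int_{B_{1}}\bigl(b^{ij,kl}(D^{2}u)\,w_{ij}+f_{\beta}^{kl}\bigr)\eta_{kl}\,dx=0,
\]
where the remainder $f_{\beta}^{kl}$ is a sum of products of smooth functions of $D^{2}u$ with derivatives of $u$ of order $\le N-1$, so $f_{\beta}^{kl}\in C^{1,\alpha}(B_{1})$, and likewise $b^{ij,kl}(D^{2}u)\in C^{1,\alpha}(B_{1})$.

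By regularity condition $3$, either $b$ or $-b$ satisfies (\ref{Bcondition}) on $\overline{D^{2}u(B_{1})}\subset U$; replacing $a$ by $-a$ if necessary, I may assume the coefficient above satisfies the ellipticity required by Propositions \ref{prop3} and \ref{prop4} with constant $\Lambda_{1}$. Applying Proposition \ref{prop3} gives $w\in W^{3,2}(B_{1/2})$ with $\|D^{3}w\|_{L^{2}(B_{1/2})}$ bounded by $\|w\|_{W^{2,\infty}(B_{1})}$, $\|f_{\beta}^{kl}\|_{W^{1,2}(B_{1})}$, $\|b^{ij,kl}(D^{2}u)\|_{W^{1,2}(B_{1})}$ and $\Lambda_{1}$; since $\|D(b^{ij,kl}(D^{2}u))\|_{L^{2}}\le\sup_{U}|\partial b/\partial u|\cdot\|D^{3}u\|_{L^{2}}$ and the $W^{2,\infty}$ norm of $w$ and the $W^{1,2}$ norm of $f_{\beta}^{kl}$ are controlled by $\|u\|_{W^{N,\infty}(B_{1})}$ and the structure, this depends only on $n$, $b$ and $\|u\|_{W^{N,\infty}(B_{1})}$. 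Then Proposition \ref{prop4} upgrades this to $w\in C^{3,\alpha}(B_{1/4})$ with $\|D^{3}w\|_{C^{\alpha}(B_{1/4})}\le C(1+\|D^{3}w\|_{L^{2}(B_{3/4})})$, $C$ depending only on $|b^{ij,kl}(D^{2}u)|_{C^{\alpha}}$ -- hence again on $\|u\|_{W^{N,\infty}(B_{1})}$ by interpolation -- together with $\Lambda_{1}$ and $\alpha$. Running this for every $\beta$ with $|\beta|=N-2$ shows $D^{N-2}u\in C^{3,\alpha}(B_{1/4})$, i.e.\ $u\in C^{N+1,\alpha}(B_{1/4})$ with the asserted bound; since all estimates are scale invariant, a covering/rescaling argument gives it on $B_{r}$ for every $r<1$.

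The one genuinely delicate case -- and the main obstacle -- is the base case $N=2$, where $b^{ij,kl}(D^{2}u)$ is only $C^{0,\alpha}$ and $u$ cannot be differentiated classically, so Propositions \ref{prop3}--\ref{prop4} are not directly applicable and must be fed in by hand. I would do this in two steps. First, $C^{2,\alpha}\Rightarrow W^{3,2}$ locally: take a difference quotient of (\ref{eq1}) in direction $e_{m}$; using the product rule for difference quotients and writing $[a^{ij,kl}(D^{2}u)]^{h_{m}}=\tilde a_{h}^{ij,kl;pq}(u_{pq})^{h_{m}}$ with $\tilde a_{h}^{ij,kl;pq}\to\frac{\partial a^{ij,kl}}{\partial u_{pq}}(D^{2}u)$ uniformly, the same index-pair relabelling as above converts the difference-quotiented equation into
\[
\int_{B_{1}}C_{h}^{ij,kl}(x)\,(u_{ij})^{h_{m}}\,\eta_{kl}\,dx=0,
\]
where $C_{h}^{ij,kl}=a^{ij,kl}(D^{2}u)(\cdot+he_{m})+\tilde a_{h}^{pq,kl;ij}u_{pq}\to b^{ij,kl}(D^{2}u)$ uniformly as $h\to0$, hence is uniformly bounded and (after multiplying by $-1$ if necessary) uniformly elliptic with constant $\Lambda_{1}/2$ for $|h|$ small. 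The crucial point is that the ``bad'' coefficient--difference-quotient term is absorbed into the ellipticity rather than surfacing as a forcing term that would demand $W^{1,2}$ coefficients. Testing with $\tau^{4}u^{h_{m}}$ exactly as in the proof of Proposition \ref{prop3} -- the remaining terms all pair one second derivative of $u^{h_{m}}$ against first/zeroth derivatives, uniformly bounded in $h$ by $\|u\|_{W^{2,\infty}}$ -- and absorbing via Young's inequality gives $\|D^{2}u^{h_{m}}\|_{L^{2}(B_{1/2})}\le C(\|u\|_{W^{2,\infty}(B_{1})},\Lambda_{1},b)$ uniformly, so $u\in W^{3,2}(B_{1/2})$. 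Second, $W^{3,2}\cap C^{2,\alpha}\Rightarrow C^{3,\alpha}$: now the difference quotients converge and the limiting equation is $\int b^{ij,kl}(D^{2}u)w_{ij}\eta_{kl}=0$ for $w=D_{m}u\in W^{2,2}$, with zero forcing; I would then run the Campanato comparison argument of Proposition \ref{prop4} directly on $w$ -- comparing with the frozen-coefficient solution supplied by Theorem \ref{five}, using Corollary \ref{Cor2} and \cite[Lemma 3.4]{HL} -- which needs only $\zeta(r)^{2}\le Cr^{2\alpha}$ from $b^{ij,kl}(D^{2}u)\in C^{0,\alpha}$ and no interior difference quotient (the forcing being zero). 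The two-round bootstrap -- a rough Morrey estimate first to obtain $D^{2}w\in L^{\infty}_{\mathrm{loc}}$, then re-insertion for the sharp $C^{\alpha}$ decay -- gives $w\in C^{2,\alpha}$, i.e.\ $u\in C^{3,\alpha}$, after which the $N\ge3$ argument applies and the induction closes.
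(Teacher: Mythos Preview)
Your proof is correct and follows essentially the same route as the paper's, including the crucial by-hand treatment of the $N=2$ base case (difference-quotient $W^{3,2}$ estimate via the approximate coefficient $C_h^{ij,kl}\to b^{ij,kl}$ absorbed into ellipticity, followed by a Campanato comparison with frozen coefficients). The only organizational differences are that the paper separates $N=3$ as its own case (where the forcing vanishes), and in the second step of $N=2$ it runs the Campanato argument on the difference quotients $g=u^{h_m}$ with the $C^{\alpha}$ coefficient $\tilde b^{ij,kl}$ and only then lets $h\to 0$, whereas you pass to the limit first (using the just-obtained $W^{3,2}$ regularity) and work with $w=D_m u\in W^{2,2}$ directly; both variants are valid.
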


\textbf{Case 1} $N=2.$ The function $u\in C^{2,\alpha}\left(
B_{1}\right)  $ and hence also in $W^{2,\infty}\left(  B_{1}\right)  $ . By
approximation (\ref{eq1}) holds for $\eta\in W_{0}^{2,\infty},$ in particular,
for%
\[
\eta=-[\tau^{4}u^{h_{m}}]^{-h_{m}}%
\]
where $\tau\in C_{c}^{\infty}\left(  B_{1}\right)  $ is a cut off function in
$B_{1}$ that is $1$ on~$B_{1/2}$, and superscript $h_{m}$ refers to the
difference quotient. As before, we have chosen $h$ small enough (depending
on~$\tau$) so that~$\eta$ is well defined . We have%
\[
\int_{\Omega}a^{ij,kl}(D^{2}u)u_{ij}\left[  \tau^{4}f^{h_{m}}\right]
_{kl}dx=0.
\]
Integrating by parts as before with respect to the difference quotient, we get%
\[
\int_{B_{1}}[a^{ij,kl}(D^{2}f)u_{ij}]^{h_{m}}[\tau^{4}u^{h_{m}}]_{kl}dx=0
\]
Let $v=u^{h_{m}}$. Observe that the first difference quotient can be
expressed as
\begin{align}
\lbrack a^{ij,kl}(D^{2}f)u_{ij}]^{h_{m}}(x)  &  =a^{ij,kl}(D^{2}%
u(x+he_{m}))\frac{u_{ij}(x+he_{m})-u_{ij}(x)}{h}\label{diff_of_a}\\
&  +\frac{1}{h}\left[  a^{ij,kl}(D^{2}u(x+he_{m}))-a^{ij,kl}(D^{2}%
u(x))\right]  u_{ij}(x)\nonumber\\
&  =a^{ij,kl}(D^{2}u(x+he_{m}))v_{ij}(x)\nonumber\\
&  +\left[  \int_{0}^{1}\frac{\partial a^{ij,kl}}{\partial u_{pq}}%
(tD^{2}u(x+he_{m})+(1-t)D^{2}u(x))dt\right]  v_{pq}(x)u_{ij}(x).\nonumber
\end{align}
We get%
\begin{equation}
\int_{B_{1}}\tilde{b}^{ij,kl}v_{ij}[\tau^{4}v]_{kl}dx=0 \label{dq3}%
\end{equation}
where%
\begin{equation}
\tilde{b}^{ij,kl}(x)=a^{ij,kl}(D^{2}u(x+he_{m}))+\left[  \int_{0}^{1}%
\frac{\partial a^{pq,kl}}{\partial u_{ij}}(tD^{2}u(x+he_{m})+(1-t)D^{2}%
u(x))dt\right]  u_{pq}(x). \label{btwid}%
\end{equation}
Expanding derivatives of the second factor\ in (\ref{dq3}) and collecting
terms gives us
\[
\int_{B_{1}}\tilde{b}^{ij,kl}v_{ij}\tau^{4}v_{kl}dx\leq\int_{B_{1}}\left\vert
\tilde{b}^{ij,kl}\right\vert \left\vert v_{ij}\right\vert \tau^{2}C_{28}%
(\tau,D\tau,D^{2}\tau)\left(  1+|v|+|Dv|\right)  dx\,
\]
Now for $h$ small,   $\tilde{b}^{ij,kl}$ very closely approximates  $b^{ij,kl},$
so we may assume $h$ is small. Applying (\ref{Bcondition})) and Young's
inequality%
\[
\int_{B_{1}}\tau^{4}\Lambda_{1}|D^{2}v|^{2}\leq C_{28}\sup\tilde{b}%
^{ij,kl}\int_{B_{1}}\left(  \varepsilon\tau^{4}|D^{2}v|^{2}+C_{32}\frac
{1}{\varepsilon}(1+|v|+|Dv|)^{2}\right)  dx.
\]
That is
\[
\int_{B_{1/2}}|D^{2}v|^{2}\leq C_{29}\int_{B_{1}}(1+|v|+|Dv|)^{2}dx.
\]
Now this estimate is uniform in $h$ (for $h$ small enough) and direction
$e_{m,}$ so we conclude that the derivatives are in $W^{2,2}(B_{1/2}).$ This
also shows that%
\[
||D^{3}u||_{L^{2}(B_{1/2})}\leq C_{30}\left(  ||Du||_{L^{2}(B_{1})},\left\Vert
D^{2}u\right\Vert _{L^{2}(B_{1})}\right)  .
\]
Remark : We only used uniform continuity of $D^{2}u$ to allow us to take the
limit, but we did require the precise modulus of continuity.

For the next step, we are not quite able to use Proposition \ref{prop4}
because the coefficients $a^{ij,kl}$ are only known to be $W^{1,2}$. \ So we
proceed by hand. \ \ 

We begin by taking a single difference quotient%
\[
\int_{B_{1}}[a^{ij,kl}(D^{2}f)u_{ij}]^{h_{m}}\eta_{kl}dx=0
\]
and arriving at the equation in the same fashion as to (\ref{dq3}) above (this
time letting $g=u^{h_{m}}$) we have
\[
\int_{B_{1}}\tilde{b}^{ij,kl}g_{ij}(x)\eta_{kl}dx=0.
\]
Inspecting (\ref{btwid}) we see that $\tilde{b}^{ij,kl}$ is $C^{\alpha}:$ \
\[
\left\Vert \tilde{b}^{ij,kl}(x)-\tilde{b}^{ij,kl}(y)\right\Vert \leq
C_{31}\left\vert x-y\right\vert ^{\alpha}\text{ }%
\]
where $C_{31}$ depends on $\left\Vert D^{2}u\right\Vert _{C^{\alpha}}$ and on
bounds of $Da^{ij,kl}$ and $D^{2}a^{ij,kl}.$\ As in the proof of Proposition
\ref{prop4}, \ for a fixed $r\,<1$ \ we let $w$ solve the boundary value
problem
\begin{align*}
\int\tilde{b}^{ij,kl}(0)w_{ij}\eta_{kl}dx  &  =0,\forall\eta\in C_{0}^{\infty
}(B_{r})\\
w  &  =g\text{ on }\partial B_{r}\\
\nabla w  &  =\nabla g\text{ on }\partial B_{r}%
\end{align*}
Let $v=g-w.$ Note that%
\[
\int\tilde{b}^{ij,kl}(0)v_{ij}\eta_{kl}dx=\int\left(  \tilde{b}^{ij,kl}%
(0)-\tilde{b}^{ij,kl}(x)\right)  g_{ij}\eta_{kl}dx.
\]
Now $v$ vanishes to second order on the boundary, and we may use $v$ as a test
function. We get
\[
\int\tilde{b}^{ij,kl}(0)v_{ij}v_{kl}dx=\int\left(  \tilde{b}^{ij,kl}%
(0)-\tilde{b}^{ij,kl}(x)\right)  g_{ij}v_{kl}dx.
\]
As before,
\[
\left(  \Lambda\int_{B_{r}}\left\vert D^{2}v\right\vert ^{2}dx\right)
^{2}\leq\left[  \sup_{x\in B_{r}}\left\vert \tilde{b}^{ij,kl}(0)-\tilde
{b}^{ij,kl}(x)\right\vert \right]  ^{2}\int_{B_{r}}\left\vert D^{2}%
g\right\vert ^{2}dx\int_{B_{r}}\left\vert D^{2}v\right\vert ^{2}dx.
\]

Defining
\begin{align}
\zeta(r)  &  =\sup\{\left\vert \tilde{b}^{ij,kl}(x)-\tilde{b}^{ij,kl}%
(y)\right\vert x,y\in B_{r}\}\label{alphab}\\
&  \leq4^{\alpha}C_{31}r^{2\alpha}\nonumber
\end{align}
then%
\[
\int_{B_{r}}(\tilde{b}^{ij,kl}(0)-\tilde{b}^{ij,kl}(x))g_{ij}v_{kl}dx)^{2}%
\leq\zeta^{2}(r)\int_{B_{r}}\left\vert D^{2}g\right\vert ^{2}\int_{B_{r}%
}\left\vert D^{2}v\right\vert ^{2}.
\]
So now we have :%

\[
\int_{B_{r}}\left\vert D^{2}v\right\vert ^{2}\leq\frac{\zeta^{2}(r)}%
{\Lambda^{2}}\int_{B_{r}}\left\vert D^{2}g\right\vert ^{2}.
\]
Using Corollary \ref{Cor2}, for any $0<\rho\leq r$ we get%

\begin{align}
\int_{B_{\rho}}\left\vert D^{2}g-(D^{2}g)_{\rho}\right\vert ^{2}  &
\leq4C_{2}(\rho/r)^{n+2}\int_{B_{r}}\left\vert D^{2}g-(D^{2}g)_{r}\right\vert
^{2}\nonumber\\
&  +\left(  8+16C_{2}\right)  \int_{B_{r}}\left\vert D^{2}v\right\vert
^{2}\nonumber\\
&  \leq4C_{2}(\rho/r)^{n+2}\int_{B_{r}}\left\vert D^{2}g-(D^{2}g)_{r}%
\right\vert ^{2}+\frac{\left(  8+16C_{2}\right)  \zeta^{2}(r)}{\Lambda^{2}%
}\left\Vert D^{2}g\right\Vert _{L^{2}(B_{r})}^{2}. \label{fromcor6}%
\end{align}
Also by Corollary \ref{Cor2}%

\begin{align*}
\int_{B_{\rho}}\left\vert D^{2}g\right\vert ^{2}  &  \leq4C_{1}(\rho
/r)^{n}\left\Vert D^{2}g\right\Vert _{L^{2}(B_{r})}^{2}+\left(  2+8C_{1}%
\right)  \left\Vert D^{2}v\right\Vert _{L^{2}(B_{r})}^{2}\\
&  \leq4C_{1}(\rho/r)^{n}\left\Vert D^{2}g\right\Vert _{L^{2}(B_{r})}%
^{2}+\left(  2+8C_{1}\right)  \frac{\zeta^{2}(r)}{\Lambda^{2}}\left\Vert
D^{2}g\right\Vert _{L^{2}(B_{r})}^{2}.
\end{align*}
This implies%

\[
\int_{B_{\rho}}\left\vert D^{2}g\right\vert ^{2}\leq\left(  4C_{1}(\rho
/r)^{n}+\left(  2+8C_{1}\right)  4^{2\alpha}C_{31}^{2}r^{2\alpha}\right)
\left\Vert D^{2}g\right\Vert _{L^{2}(B_{r})}^{2}.
\]
Now we can apply \cite[Lemma 3.4]{HL} again, this time with
\begin{align*}
\phi(\rho)  &  =\int_{B_{\rho}}\left\vert D^{2}g\right\vert ^{2}\\
A  &  =4C_{1}\\
\alpha &  =n\\
B,\beta &  =0\\
\gamma &  =n-2\delta\\
\varepsilon &  =\left(  2+8C_{1}\right)  4^{2\alpha}C_{31}^{2}r^{2\alpha}.
\end{align*}
There exists a constant $\varepsilon_{0}(A,\alpha,\gamma)$ such that by
chosing
\[
r_{0}^{2\alpha}\leq\frac{\varepsilon_{0}}{\left(  2+8C_{1}\right)  4^{2\alpha
}C_{31}^{2}}<\frac{1}{4}%
\]
we may conclude that for $0<r\leq r_{0}$%
\begin{equation}
\int_{B_{r}}\left\vert D^{2}g\right\vert ^{2}\leq C_{32}r^{n-2\delta}%
\frac{\int_{Br_{0}}\left\vert D^{2}g\right\vert ^{2}}{r_{0}^{n-2\delta}}.
\label{conclusion5}%
\end{equation}

Next, for small $\rho<r<r_{0}$ we have combining (\ref{fromcor6})
(\ref{alphab}) and (\ref{conclusion5})
\begin{align}
\int_{B\rho}\left\vert D^{2}g-(D^{2}g)_{\rho}\right\vert ^{2}  &  \leq
4C_{2}(\rho/r)^{n+2}\int_{B_{r}}\left\vert D^{2}g-(D^{2}g)_{r}\right\vert
^{2}\label{bigintegral}\\
&  +\frac{\left(  8+16C_{2}\right)  4^{\alpha}}{\Lambda^{2}}\frac{\int%
_{Br_{0}}\left\vert D^{2}g\right\vert ^{2}}{r_{0}^{n-2\delta}}C_{31}%
C_{32}r^{n-2\delta}r^{2\alpha}\nonumber\\
&  \leq C_{33}r^{n+2\alpha-\delta}\nonumber
\end{align}
with $C_{33}$ depending on $\left\Vert D^{2}g\right\Vert _{L^{2}(B_{3/4}%
)},r_{0},\varepsilon_{0}$. Again, we apply \cite[Theorem 3.1]{HL} \ to
$D^{2}g\in C^{\left(  2\alpha-\delta\right)  /2}(B_{1/4}).$ \ From here, the
argument is identical to the argument following (\ref{repeatlater}). We
conclude that%
\[
\left\Vert D^{2}g\right\Vert _{C^{\alpha}(B_{1/4})}\leq C_{34}\left\{
1+\left\Vert D^{2}g\right\Vert _{L^{2}(B_{3/4})}\right\}  .
\]
Substituting $g=u^{h_{m}}$ we see that $u$ must enjoy uniform $C^{3,\alpha}$
estimates on the interior, and the result follows.

\textbf{Case 2 }$N=3.$ \ We may take a difference quotient\ of (\ref{eq1})
directly.%
\[
\int_{\Omega}\left[  a^{ij,kl}(D^{2}u)u_{ij}\right]  ^{h_{m}}\eta
_{kl}dx=0,\text{ }\forall\eta\in C_{0}^{\infty}(\Omega).
\]
(To be more clear we, are using a slightly offset test function
$\eta(x+he_{m})$ and then using a change of variables, subtracting, and
dividing by $h.$)\ 

We get
\[
\int_{B_{1}}\left[  a^{ij,kl}(D^{2}u(x+he_{m}))u_{ij}^{h_{m}}(x)+\frac
{\partial a^{ij,kl}}{\partial u_{pq}}(M^{\ast}(x))u_{pq}^{h_{m}}%
(x)u_{ij}(x)\right]  \eta_{kl}=0.
\]
where $M^{\ast}(x)=t^{\ast}D^{2}u(x+h_{m})+(1-t^{\ast})D^{2}u(x)$ and
$t^{\ast}\in\lbrack0,1]$. Now we are assuming that $u\in C^{3,\alpha
}(B_{1}),$ so the first and second derivatives of the difference quotient
will converge to the second and third derivatives, uniformly.
We can then apply dominated convergence, passing the limit as
$h\rightarrow0$ inside the integral and recalling $u_{m}=v$ as before, we
obtain \
\[
\int_{B_{1}}\left[  [a^{ij,kl}(D^{2}u(x))v_{ij}(x)+\frac{\partial a^{pq,kl}%
}{\partial u_{ij}}\left(  D^{2}u(x)\right)  v_{ij}(x)u_{pq}(x)\right]
\eta_{kl}=0
\]
that is
\begin{equation}
\int_{B_{1}}b^{ij,kl}(D^{2}u(x))v_{ij}(x)\eta_{kl}(x)=0,\text{ \ \ }%
\forall\eta\in C_{0}^{\infty}(\Omega). \label{eqb3}%
\end{equation}

It follows that $v\in C^{2,\alpha\text{ }}$satisfies a fourth order double
divergence equation, with coefficients in $C^{1,\alpha}.$ \ First, we apply
Proposition \ref{prop3}\ :
\[
\left\Vert D^{3}v\right\Vert _{L^{2}(B_{1/2})}\leq C_{35}\left(
||v||_{W^{2,\infty}(B_{1})}\right)  (1+||b^{ij,kl}||_{W^{1,2}(B_{1})}).
\]
In particular, $u\in W^{4,2}(B_{1/2}).$ Next, we apply \ref{prop4}
\begin{align*}
||D^{3}v||_{C^{0,\alpha}(B_{1/4})}  &  \leq C(1+||D^{3}v||_{L^{2}(B_{1/2}%
)})\leq C(||u||_{W^{2,\infty}(B_{1})},|b^{ij,kl}||_{W^{1,2}(B_{1})})\\
&  \leq C_{36}(n,b,\left\Vert u\right\Vert _{C^{3,\alpha}(B_{1})}).
\end{align*}
We conclude that $u\in C^{4,\alpha}(B_{r})$ for any $r<1.$

\textbf{Case 3 } $N\geq4$. \ \ Let $v=D^{\alpha}u$ for some multindex $\alpha$
with $\left\vert \alpha\right\vert =N-2.$ Observe that taking the first
difference quotient and then taking a limit yields (\ref{eqb3}), when $u\in
C^{3,\alpha}.$ \ Now if $u\in C^{4,\alpha}$ we may take a difference quotient
and limit of (\ref{eqb3}) to obtain%
\[
\int_{B_{1}}\left[  b^{ij,kl}(D^{2}u(x))u_{ijm_{1}m_{2}}(x)+\frac{\partial
b^{ij,kl}}{\partial u_{pq}}(D^{2}u(x))u_{pqm_{2}}u_{ij}\right]  \eta
_{kl}(x)=0,\text{ \ \ }\forall\eta\in C_{0}^{\infty}(\Omega).
\]
and if $u\in C^{N,\alpha}$, then $v\in C^{2,\alpha}$, so we may take $N-2$
difference quotients to obtain%
\begin{equation}
\int_{B_{1}}\left[  b^{ij,kl}(D^{2}u(x))v_{ij}(x)+f^{kl}(x)\right]  \eta
_{kl}(x)=0,\text{ \ \ }\forall\eta\in C_{0}^{\infty}(\Omega).
\label{inductiveeq}%
\end{equation}
where
\[
f^{kl}=D^{\alpha}\left(  b^{ij,kl}(D^{2}u(x))u_{ij}\right)  -b^{ij,kl}%
(D^{2}u(x))D^{\alpha}u_{ij}.
\]
One can check by applying the chain rule repeatedly that $f^{kl}$ is
$C^{1,\alpha}.$ \ So we may apply Proposition \ref{prop3}\ \ to
(\ref{inductiveeq}) and obtain that
\[
\left\Vert D^{3}v\right\Vert _{L^{2}(B_{1/2})}\leq C_{37}(\left\Vert
v\right\Vert _{W^{2,\infty}(B_{1})})(1+||b^{ij,kl}||_{W^{1,2}(B_{1})})
\]
that is
\[
\left\Vert u\right\Vert _{W^{N+1,2}(B_{r})}\leq C_{38}(n,b,\left\Vert
u\right\Vert _{W^{N,\infty}(B_{1})}).
\]
Now apply Proposition \ref{prop4}:%
\[
||D^{3}v||_{C^{0,\alpha}(B_{1/4})}\leq C_{39}(1+||D^{3}v||_{L^{2}(B_{3/4})})
\]
that is
\[
\left\Vert u\right\Vert _{C^{N+1,\alpha}(B_{r})}\leq C_{40}(n,b,\left\Vert
u\right\Vert _{W^{N,\infty}(B_{1})}).
\]

The Main Theorem follows.

\section{Critical Points of Convex Functions of the Hessian}

Suppose that $F(D^{2}u)$ is either a convex or a concave function of $D^{2}u,$
and we have found a critical point of
\begin{equation}
\int_{\Omega}F(D^{2}u)dx\label{Ffunc}%
\end{equation}
for some $\Omega\subset%
\mathbb{R}
^{n},$ where we are restricting to compactly supported variations, so the that
Euler-Lagrange equation is (\ref{generic}). \ \ If we suppose that $F$ also
has the additional structure condition,
\begin{equation}
\frac{\partial F(D^{2}u)}{\partial u_{ij}}=a^{pq,ij}(D^{2}u)u_{pq}%
\label{structc}%
\end{equation}
for a some $a^{ij,kl}$ satisfying (\ref{LH}), then we can derive smoothness
from $C^{2,\alpha},$ as follows.

\begin{corollary}
~Suppose $u\in C^{2,\alpha}(B_{1})$ is critical point of (\ref{Ffunc}), where
$F$ is a smooth function satisfying (\ref{structc}) with $a^{ij,kl}$
satisfying (\ref{LH}) and F is uniformly convex or uniformly concave on
$U\subseteq S^{n\times n}$ where~$U$ is the range of~$D^{2}u(B_{1})$ in the
Hessian space. 

Then $u\in C^{\infty}(B_{r})$, for all $r<1.$

\end{corollary}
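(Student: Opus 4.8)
The plan is to reduce the corollary to the Main Theorem by verifying that the Euler–Lagrange equation of $F$ is a regular fourth order double divergence equation in the sense of the Definition. First I would write out the Euler–Lagrange equation (\ref{generic}) explicitly and observe that, under the structure condition (\ref{structc}), it takes exactly the form (\ref{eq1}) with coefficients $a^{ij,kl}(D^2u)$; these depend smoothly on $D^2u$ because $F$ is smooth and (\ref{structc}) expresses $a^{pq,ij}u_{pq}$ as $\partial F/\partial u_{ij}$, from which $a^{ij,kl}$ itself can be recovered smoothly (e.g.\ by differentiating once more in $u_{kl}$, or simply noting the hypothesis posits such an $a^{ij,kl}$ satisfying (\ref{LH}) directly). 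So conditions 1 and 2 in the Definition of ``regular'' are immediate. The entire content of the corollary is therefore condition 3: that either $b^{ij,kl}$ or $-b^{ij,kl}$, with $b^{ij,kl}$ defined by (\ref{Bdef}), satisfies the Legendre–Hadamard-type bound (\ref{Bcondition}) on $U$.

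The key step is to identify $b^{ij,kl}$ with the Hessian of $F$ as a function on $S^{n\times n}$. Starting from (\ref{structc}), I would differentiate $\partial F/\partial u_{ij} = a^{pq,ij}(D^2u)\,u_{pq}$ with respect to $u_{kl}$:
\begin{equation}
\frac{\partial^2 F}{\partial u_{kl}\,\partial u_{ij}} = a^{kl,ij}(D^2u) + \frac{\partial a^{pq,ij}}{\partial u_{kl}}(D^2u)\,u_{pq}.
\end{equation}
Comparing with (\ref{Bdef}) (after matching indices, using that we may assume the natural symmetries $a^{ij,kl}=a^{kl,ij}$ coming from a second-order Euler--Lagrange structure, or relabelling), the right-hand side is precisely $b^{ij,kl}(D^2u)$. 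Hence $b^{ij,kl} = \partial^2 F/\partial u_{ij}\partial u_{kl}$ is the full Hessian of $F$ on the space of symmetric matrices. If $F$ is uniformly convex on $U$, then this Hessian is uniformly positive definite as a quadratic form on all of $S^{n\times n}$, which gives $b^{ij,kl}\xi_{ij}\xi_{kl}\ge \Lambda_1\|\xi\|^2$ for every symmetric $\xi$ — in particular for rank-one $\xi_{ij}=\zeta_i\zeta_j$, so (\ref{Bcondition}) holds; if $F$ is uniformly concave we get the same for $-b^{ij,kl}$. Either way, condition 3 is satisfied, so the equation is regular on $U$.

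With all three conditions of the Definition verified on an open set containing $D^2u(B_1)$ (shrinking $U$ slightly to an open neighborhood if needed, using that $D^2u$ is continuous and $B_1$'s closure considerations only matter on $B_r$), the hypotheses of the Main Theorem are met, and we conclude $u\in C^\infty(B_r)$ for all $r<1$. The only mild subtlety — the place I would be most careful — is the bookkeeping of index symmetries: (\ref{Bdef}) writes $\partial a^{pq,kl}/\partial u_{ij}\,u_{pq}$ whereas the naive computation above produces $\partial a^{pq,ij}/\partial u_{kl}\,u_{pq}$, and one must check these agree. They do, because $\partial^2 F/\partial u_{ij}\partial u_{kl}$ is symmetric under $(ij)\leftrightarrow(kl)$ by equality of mixed partials, which forces the corresponding symmetry of $a^{ij,kl}+\partial a^{pq,kl}/\partial u_{ij}\,u_{pq}$; alternatively one simply notes that in the double-divergence integral the coefficient $b^{ij,kl}$ is paired with $v_{ij}\eta_{kl}$ and may be replaced by its $(ij,kl)$-symmetrization without changing the equation, and the symmetrized object is manifestly $\mathrm{Hess}\,F$. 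This is a routine check rather than a real obstacle, so the corollary follows.
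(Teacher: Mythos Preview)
Your proposal is correct and follows essentially the same approach as the paper: differentiate the structure condition (\ref{structc}) to identify $b^{ij,kl}$ with the Hessian $\partial^2 F/\partial u_{ij}\partial u_{kl}$, then invoke uniform convexity (or concavity, replacing $F$ by $-F$) to obtain (\ref{Bcondition}) and apply the Main Theorem. Your explicit discussion of the $(ij)\leftrightarrow(kl)$ index symmetry is in fact more careful than the paper's own proof, which writes the identity $b^{ij,kl}\xi_{ij}\xi_{kl}=\partial^2 F/\partial u_{kl}\partial u_{ij}\,\xi_{ij}\xi_{kl}$ without comment on why the indices match.
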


\begin{proof}
If $u$ is a critical point of  (\ref{Ffunc}), then it satisfies the weak
equation (\ref{eq1}), for $a^{ij,kl}$ in (\ref{structc}). To apply the
main Theorem, all we need to show is that
\[
b^{ij,kl}(D^{2}u(x))=a^{ij,kl}(D^{2}u(x))+\frac{\partial a^{pq,kl}}{\partial
u_{ij}}(D^{2}u(x))u_{pq}(x)
\]
satisfies (\ref{LH}). \ $\ $From  (\ref{structc}):%
\begin{equation}
\frac{\partial}{\partial u_{kl}}\left(  \frac{\partial F(D^{2}u)}{\partial
u_{ij}}\right)  =a^{kl,ij}(D^{2}u)+\frac{\partial a^{pq,ij}(D^{2}u)}{\partial
u_{kl}}u_{pq}.
\end{equation}
So
\[
b^{ij,kl}(D^{2}u(x))\xi_{ij}\xi_{kl}=\frac{\partial}{\partial u_{kl}}\left(
\frac{\partial F(D^{2}u)}{\partial u_{ij}}\right)  \xi_{ij}\xi_{kl}\geq
\Lambda\left\vert \xi\right\vert ^{2}%
\]
for some $\Lambda>0$, because $F$ is convex. If $F$ is concave, $u$ is
still a critical point of $-F$ and the same argument holds. 
\end{proof}

\bigskip

We mention one special case.

\begin{lemma}
Suppose $F(D^{2}u)=f(w)$ where $w=(D^{2}u)^{T}(D^{2}u).$ \ Then
\begin{equation}
\frac{\partial F(D^{2}u)}{\partial u_{ij}}=a^{ij,kl}(D^{2}u)u_{kl}%
\label{squares}%
\end{equation}

\end{lemma}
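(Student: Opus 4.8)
The plan is to compute $\partial F/\partial u_{ij}$ directly from the chain rule, writing $F(D^2u) = f(w)$ with $w_{kl} = (D^2u)^T_{ka}(D^2u)_{al} = u_{ak}u_{al}$ (using symmetry of the Hessian). First I would differentiate: $\dfrac{\partial F}{\partial u_{ij}} = \dfrac{\partial f}{\partial w_{kl}}\,\dfrac{\partial w_{kl}}{\partial u_{ij}}$, and since $w_{kl} = \sum_a u_{ak}u_{al}$, one gets $\dfrac{\partial w_{kl}}{\partial u_{ij}} = \delta_{ki}u_{jl} + \delta_{li}u_{jk}$ (after symmetrizing in $i\leftrightarrow j$, which is legitimate because we only test against symmetric variations $\xi_{ij}$). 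Substituting yields
\[
\frac{\partial F}{\partial u_{ij}} = \frac{\partial f}{\partial w_{kl}}\bigl(\delta_{ki}u_{jl} + \delta_{li}u_{jk}\bigr) = \frac{\partial f}{\partial w_{il}}u_{jl} + \frac{\partial f}{\partial w_{ki}}u_{jk} = 2\,\frac{\partial f}{\partial w_{il}}(D^2u)\,u_{jl},
\]
using that $f$ may be taken symmetric in its matrix argument so $\partial f/\partial w_{kl}$ is a symmetric tensor.

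The second step is simply to read off the coefficient. Setting
\[
a^{ij,kl}(D^2u) := 2\,\frac{\partial f}{\partial w_{jl}}\bigl((D^2u)^T D^2u\bigr)\,\delta_{ik}
\]
(or a suitably index-relabelled version, chosen so that $a^{ij,kl}u_{kl}$ reproduces the expression above with the free indices matching the convention in \eqref{squares}), we obtain $\partial F/\partial u_{ij} = a^{ij,kl}(D^2u)u_{kl}$, which is exactly the claimed structural identity. The only real care needed is bookkeeping of which index pairs are contracted against $u$ and which are the "outer" Legendre–Hadamard indices; since the statement only asserts existence of such an $a^{ij,kl}$, any consistent relabelling suffices.

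The main obstacle — if one can call it that — is purely notational: making the index contraction in $w = (D^2u)^T D^2u$ line up with the $(ij,kl)$ convention used throughout the paper, and tracking the factor of $2$ and the symmetrizations coming from $D^2u$ being symmetric and from $f$ being a function of the symmetric matrix $w$. There is no analytic difficulty here; the lemma is a direct differentiation, and its role in the paper is to certify that functionals of $(D^2u)^T D^2u$ (such as the Hamiltonian stationary functional \eqref{HS}) fall into the class \eqref{structc} to which the preceding Corollary applies.
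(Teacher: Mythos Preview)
Your proposal is correct and follows essentially the same route as the paper: a direct chain-rule computation of $\partial F/\partial u_{ij}$ through $w=(D^2u)^T D^2u$, followed by reading off the coefficient $a^{ij,kl}$. The only cosmetic difference is that the paper keeps the two resulting terms separate, writing $a^{ij,kl}=\dfrac{\partial f}{\partial w_{il}}\delta_{jk}+\dfrac{\partial f}{\partial w_{kj}}\delta_{il}$, whereas you invoke the symmetry of $\partial f/\partial w$ to collapse them into a single term with a factor of $2$; both choices satisfy \eqref{squares}.
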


\begin{proof}
Let%
\[
w_{kl}=u_{ka}\delta^{ab}u_{bl}.
\]
Then%
\begin{align*}
\frac{\partial F(D^{2}u)}{\partial u_{ij}}  & =\frac{\partial f(w)}{\partial
w_{kl}}\frac{\partial w_{kl}}{\partial u_{ij}}\\
& =\frac{\partial f(w)}{\partial w_{kl}}\left(  \delta_{ka,ij}\delta
^{ab}u_{bl}+u_{ka}\delta^{ab}\delta_{bl,ij}\right)  \\
& =\frac{\partial f(w)}{\partial w_{kl}}\left(  \delta_{ki}u_{jl}+u_{ki}%
\delta_{lj}\right)  \\
& =\frac{\partial f(w)}{\partial w_{il}}\delta_{jm}u_{ml}+\frac{\partial
f(w)}{\partial w_{kj}}u_{km}\delta_{im}\\
& =\frac{\partial f(w)}{\partial w_{il}}\delta_{jk}u_{kl}+\frac{\partial
f(w)}{\partial w_{kj}}u_{kl}\delta_{il}.
\end{align*}
This shows (\ref{squares}) for
\[
a^{ij,kl}=\frac{\partial f(w)}{\partial w_{il}}\delta_{jk}+\frac{\partial
f(w)}{\partial w_{kj}}\delta_{il}.
\]

\end{proof}

\bibliographystyle{amsalpha}
\bibliography{schauder}

\end{document}